\newtheorem{theorem}{Theorem}[section]
\newtheorem{lemma}[theorem]{Lemma}
\newtheorem{corollary}[theorem]{Corollary}
\newtheorem{conjecture}[theorem]{Conjecture}
\newcommand{\binomial}[2]{\left(\begin{array}{c} #1 \\ #2 \end{array}\right)}
\tikzset{tags/.style={postaction={decorate,
    decoration={markings,mark=at position .4cm with
    {\draw (-.035,-.07) -- (.035,.07) -- (-.035,.07) -- (.035,-.07) -- cycle;}}}}}
\tikzset{tage/.style={postaction={decorate,
    decoration={markings,mark=at position -.4cm with
    {\draw (-.035,-.07) -- (.035,.07) -- (-.035,.07) -- (.035,-.07) -- cycle;}}}}}
\begin{document}

\title{Non-zero integral friezes}
\author{Bruce Fontaine}
\email{bfontain@math.cornell.edu}
\address{ Bruce Fontaine \\
          Department of Mathematics \\
          310 Malott Hall \\
          Cornell University \\
          Ithaca, NY USA.}
\begin{abstract}
  We study non-zero integral friezes for Dynkin types $A_n$, $B_n$, $C_n$,
  $D_n$ and $G_2$. These differ from standard Coxeter-Conway (positive) friezes
  by allowing any non-zero integer to appear. In each case we show that there
  are either $1$, $2$ or $4$ times as many non-zero friezes as positive friezes.
	This is a first step for considering friezes over general rings of integers.
\end{abstract}

\maketitle

\section{Introduction}

In 1973 Coxeter and Conway \cite{CC} studied what they called friezes. These
were grids of strictly positive (non-zero) integers on the plane satisfying a
determinant condition. For instance the following is a frieze height $3$:

$$\begin{tabular}{cccccccccccccc}
   & 1 & & 1 & & 1 & & 1 & & 1 & & 1 & & 1 \\
  4 & & 1 & & 2 & & 2 & & 2 & & 1 & & 4 & \\
   & 3 & & 1 & & 3 & & 3 & & 1 & & 3 & & 3 \\
  2 & & 2 & & 1 & & 4 & & 1 & & 2 & & 2 & \\
   & 1 & & 1 & & 1 & & 1 & & 1 & & 1 & & 1
\end{tabular}$$

Here the height measures the number of rows between the two rows of $1$s. In the
above figure, every two by two diamond \begin{tabular}{ccc} & $a$ & \\ $b$ & &
$c$ \\ & $d$ & \end{tabular} satisfies $ad+1=bc$ or $ad-bc=1$.

Coxeter-Conway showed the following was true:

\begin{theorem}\cite{CC}
  There are $C(k+1)$ friezes of height $k$ where $C(k+1)$ is the $k+1$-st
  Catalan number.
\end{theorem}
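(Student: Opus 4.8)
The plan is to prove the theorem by constructing an explicit bijection between friezes of height $k$ and triangulations of a convex polygon on $n = k+3$ vertices; the stated count $C(k+1)$ then follows from the classical enumeration of triangulations. First I would show that a frieze is determined by a single finite datum. Writing $a_1, a_2, a_3, \dots$ for the entries of the row just below the top row of $1$s --- the \emph{quiddity sequence} --- the diamond relation $bc - ad = 1$ can be solved for the lower entry $d = (bc-1)/a$ once $a,b,c$ are known, so propagating downward shows that the whole pattern is recovered from its top two rows. It is convenient to encode this propagation by the matrices $M(a) = \begin{pmatrix} a & -1 \\ 1 & 0 \end{pmatrix}$, whose products produce the frieze entries as continuants of consecutive $a_i$. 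The condition that the pattern close up --- a second row of $1$s after exactly $k$ intermediate rows, forcing the glide symmetry and hence periodicity of period $n = k+3$ --- then becomes the monodromy condition $M(a_1) M(a_2) \cdots M(a_n) = \pm\,\mathrm{Id}$, subject to positivity of all intermediate entries.

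Next I would define the map from triangulations to friezes. To a triangulation $T$ of the convex $n$-gon I assign the sequence whose $i$-th term $a_i$ is the number of triangles of $T$ incident to the $i$-th vertex. I would check that this sequence is the quiddity sequence of a bona fide positive-integer frieze; rather than verify the monodromy condition by hand, I would fold this verification into the induction below, using at the base the sanity check $\sum_i a_i = 3(n-2)$, since each of the $n-2$ triangles contributes to its three vertices.

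The heart of the argument, and the step I expect to be the main obstacle, is the lemma that \emph{every} positive-integer frieze has an entry equal to $1$ in its quiddity row. I would prove this from the monotonicity of the diagonal recurrence: the entries along a SW--NE diagonal satisfy a second-order recurrence $u_{j+1} = a\,u_j - u_{j-1}$ whose coefficients are quiddity entries, and each diagonal begins and ends at the value $1$ on the two bordering rows. If every $a_i$ were at least $2$, then $u_{j-1} < u_j$ would force $u_{j+1} = a\,u_j - u_{j-1} > u_j$, so a diagonal could never decrease back to $1$ --- a contradiction. Hence some $a_i = 1$, which combinatorially is precisely an \emph{ear} of $T$ at vertex $i$.

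With this lemma the bijection follows by induction on $k$. A quiddity entry $a_i = 1$ lets one delete the $i$-th vertex while decrementing its two neighbours, $a_{i-1} \mapsto a_{i-1}-1$ and $a_{i+1} \mapsto a_{i+1}-1$; on the frieze side this is a height-reducing reduction to a frieze of height $k-1$, and on the polygon side it is exactly the removal of the ear at $i$, yielding the $(k+2)$-gon. By induction the reduced frieze corresponds to a unique triangulation, and reinserting the ear recovers the original data uniquely, so the correspondence is a bijection in both directions; the base case $k=0$ is the single frieze of two adjacent rows of $1$s, matching the unique triangulation of a triangle. Finally, the number of triangulations of a convex $(k+3)$-gon is the Catalan number $\frac{1}{k+2}\binom{2k+2}{k+1} = C(k+1)$, which completes the proof.
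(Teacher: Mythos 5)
Your proposal is correct in outline, and it takes a genuinely different (though historically cognate) route from the one this paper relies on. The paper does not re-prove the theorem; it invokes \cite{CC} and sketches the argument in the language of arc labelings: a frieze is viewed as an assignment of positive integers to \emph{all} arcs of the $(k+3)$-gon satisfying the Ptolemy relation with boundary labels $1$, one shows the arcs labeled $1$ form a triangulation, and conversely a triangulation with its arcs labeled $1$ determines all other labels uniquely and positively. You instead stay inside the frieze array itself: encode a frieze by its quiddity sequence, prove the ear lemma (some quiddity entry equals $1$) via the monotonicity of the diagonal recurrence $u_{j+1}=a\,u_j-u_{j-1}$, and run an induction by ear removal against the standard ear-removal induction on triangulations. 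Your ear lemma is essentially the positive-frieze specialization of the diagonal estimate that this paper later generalizes to signed labels in lemma~\ref{lem:anarc} (which the author explicitly calls a variation of the Coxeter--Conway argument), so the two proofs share their analytic core; the difference is where the triangulation appears. The paper's formulation identifies the triangulation intrinsically (as the $1$-labeled arcs), which is what makes it portable to cluster algebras, to types $D_n$, $B_n$, $C_n$, and to the non-zero setting that is the point of this paper; your formulation is more elementary and self-contained, requiring no Ptolemy or cluster machinery. The cost of your route is three standard but non-trivial steps that you assert rather than prove: that closure of the strip forces glide symmetry and $n$-periodicity (needed before the quiddity sequence is even a finite cyclic datum), that removing an ear from a positive frieze yields a bona fide positive frieze of height $k-1$ (positivity of the reduced entries must be checked, e.g.\ by observing the reduced entries are among the original ones, or via continuant identities), and that reinserting an ear produces a frieze. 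None of these is a gap in the sense of a wrong idea --- all are classical --- but a complete write-up would have to supply them, whereas in the arc-labeling picture the corresponding facts (restriction to a sub-polygon, unique positive extension from a triangulation) fall out of the Ptolemy recursion directly.
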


They discovered this by connecting friezes to triangulations of polygons. They
observed that a frieze of height $k$ corresponds the labeling of each possible
arc that can occur in a triangulation of a $k+3$-gon with a positive integer.
They required that the external arcs be labeled $1$ and that for any $4$ arcs
forming a quadrilateral, the labels of those arcs and the two internal diagonal
arcs satisfy the the Ptolemy relation: that the sum of product of opposite sides
is the product of the diagonals.

If one considers the zigzag triangulation, the labels recover a zigzag column of
the frieze. Moreover, successive columns are recovered by rotating the zigzag.
Finally, they show that for any such assignment of integers to polygon arcs, the
set of arcs labeled $1$ form a triangulation. And conversely if one starts with
a triangulation where all arcs are labeled $1$, the remaining arcs are uniquely
determined by the Ptolemy relations and are all positive integers.

More recently, it was noted by Caldero \cite{ARS10}, that there is a deep
connection to cluster algebras. Indeed, due to the above connection to
triangulations, a frieze can be interpreted as an evaluation of the cluster
variables of the Dynkin type $A_k$ cluster algebra such that each cluster
variable is sent to a positive integer. An alternative, but equivalent
formulation is that a frieze is a ring homomorphism from the cluster algebra
to $\mathbb{Z}$ such that each cluster variable is mapped to a positive integer.

Constructing a frieze is simple: pick a cluster in the cluster algebra and
assign to each variable in the cluster the value $1$. Then due to the positivity
of the exchange relations, each other cluster variable must evaluate to a
positive number. Moreover, due to the Laurent phenomenon (every cluster variable
can be expressed as a Laurent polynomial with integral coefficients in terms of
the cluster variables in a single cluster), every other cluster variable
evaluates to an integer.

In this paper we will relax the definition by allowing negative integers to
appear. Given a cluster algebra, a \textbf{non-zero integral frieze} is an
assignment of a non-zero integer to each cluster variable satisfying the
mutation rules. We will use the term positive frieze to denote a frieze which
has all cluster variables evaluated to positive integers. Alternatively it is a
ring homomorphism from the cluster algebra to $\mathbb{Z}$ that sends each
cluster variable to a non-zero integer.

The following 3 theorems are extensions of the results in \cite{FP}:

\begin{theorem}
  For $A_k$, the number of non-zero integral friezes is twice the number of
  positive friezes when $k$ is odd and the same number when $k$ is even.
\end{theorem}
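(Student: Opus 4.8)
The plan is to show that every non-zero frieze is the entrywise product of a positive frieze with a sign pattern, and then to count the sign patterns. I work in the $(k+3)$-gon model: arcs are pairs of vertices, the $k+3$ sides are frozen to the value $1$, the diagonals are the cluster variables, and a frieze assigns non-zero integers $x_\gamma$ to arcs so that every inscribed quadrilateral with cyclic vertices $a,b,c,d$ obeys Ptolemy, $x_{ac}x_{bd}=x_{ab}x_{cd}+x_{ad}x_{bc}$. By the Coxeter--Conway theorem the positive friezes are in bijection with triangulations, hence number $C(k+1)$; the content of the theorem is therefore the size of the ``extra'' family.

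The heart of the argument, and the step I expect to be the main obstacle, is the Reduction Lemma: for every non-zero frieze $y$ the array of absolute values $|y|$ is again a positive frieze. Granting this, writing $\sigma_\gamma=\mathrm{sign}(y_\gamma)$ gives $y=\sigma\cdot|y|$, and substituting into Ptolemy shows that for each quadrilateral $(\sigma_{ac}\sigma_{bd}-\sigma_{ab}\sigma_{cd})\,x_{ab}x_{cd}+(\sigma_{ac}\sigma_{bd}-\sigma_{ad}\sigma_{bc})\,x_{ad}x_{bc}=0$; since the $x$'s are strictly positive, the two coefficients must vanish, forcing
\[ \sigma_{ac}\sigma_{bd}=\sigma_{ab}\sigma_{cd}=\sigma_{ad}\sigma_{bc}. \]
I call a $\{\pm1\}$-valued function on arcs that is $1$ on the sides and satisfies this identity on every quadrilateral \emph{admissible}. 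To prove the Reduction Lemma I would induct on $k$ by ear removal: locate a length-two arc $\{j-1,j+1\}$ with $|x_{\{j-1,j+1\}}|=1$, fold the vertex $j$ to pass to a non-zero frieze on the $(k+2)$-gon, and invoke the inductive hypothesis. The delicate point is the existence of such a unital short arc for non-zero (as opposed to positive) friezes; I expect to extract it from the $\mathrm{SL}_2(\mathbb{Z})$ closing condition $\eta(c_1)\cdots\eta(c_{k+3})=\pm I$ for the quiddity sequence $(c_i)$, where $\eta(c)=\begin{pmatrix} c & -1 \\ 1 & 0 \end{pmatrix}$, together with the requirement that no continuant vanish.

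Counting admissible $\sigma$ is then a clean finite computation. First, applying the identity to the quadrilateral on vertices $i,\,i+1,\,i+1+\ell,\,i+2+\ell$, two of whose edges are polygon sides, forces the two length-$(\ell+1)$ diagonals appearing there to carry equal sign; running $i$ around the polygon shows $\sigma$ depends only on the cyclic length $\ell$ of the arc, say $\sigma=s_\ell$ with $s_1=1$. Next, the quadrilateral on vertices $i,\,i+p,\,i+p+1,\,i+p+2$ (whose edges $\{i+p,i+p+1\}$ and $\{i+p+1,i+p+2\}$ are polygon sides) gives $s_{p+1}s_2=s_p$, that is,
\[ s_{\ell+1}=s_2\,s_\ell,\qquad\text{hence}\qquad s_\ell=s_2^{\,\ell-1}. \]
Thus the entire pattern is governed by the single sign $s_2$. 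Finally, an arc of cyclic length $\ell$ is the same arc as one of length $k+3-\ell$, so consistency demands $s_\ell=s_{k+3-\ell}$, i.e.\ $s_2^{\,k+3}=1$. When $k$ is even this forces $s_2=1$ and only the trivial pattern survives; when $k$ is odd the choice $s_2=-1$, giving $s_\ell=(-1)^{\ell-1}$, is consistent, and one checks directly that it then satisfies the full admissibility identity. Hence the admissible patterns form a group of order $2$ when $k$ is odd and $1$ when $k$ is even.

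To finish, note that the entrywise product of any positive frieze with any admissible $\sigma$ is again a non-zero frieze (the common sign factors out of each Ptolemy relation and the entries stay non-zero), and that $y\mapsto(\mathrm{sign}(y),|y|)$ inverts the product map. Thus $(\sigma,F)\mapsto\sigma\cdot F$ is a bijection from (admissible signs)$\,\times\,$(positive friezes) onto non-zero friezes, so the number of non-zero friezes is $2\,C(k+1)$ for $k$ odd and $C(k+1)$ for $k$ even, as claimed.
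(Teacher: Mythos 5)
Your overall architecture --- every non-zero frieze should factor as (admissible sign pattern) $\times$ (positive frieze), followed by a count of patterns --- is in substance the paper's: your pattern $s_\ell=(-1)^{\ell-1}$, available exactly when $k+3$ is even, is the paper's involution of Lemma~\ref{lem:neg} (negate all arcs of even length), and your three-fold sign identity on quadrilaterals is the admissibility condition of Section~\ref{sec:admis}. Your counting of patterns and the final bijection, granted the Reduction Lemma, are correct (including the small case-check that the two coefficients in your displayed identity cannot be nonzero simultaneously, since both contain the term $\sigma_{ac}\sigma_{bd}$). But the proof has a genuine gap exactly where you flag it: the Reduction Lemma is asserted, not proved, and the induction you sketch fails as stated, for two concrete reasons. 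First, the existence of a length-two arc carrying a value of absolute value $1$ is left to an ``expected'' $\mathrm{SL}_2(\mathbb{Z})$ continuant argument; the paper proves this (Lemma~\ref{lem:anarc}) by an elementary monotone-sequence/triangle-inequality argument on the fan of arcs at one vertex, and --- crucially --- proves it for arbitrary $\pm1$ boundary states, which your induction turns out to need. Second, and worse, if the ear you remove is labeled $-1$, the resulting $(k+2)$-gon has a boundary edge equal to $-1$, so it is not a frieze in your sense and your inductive hypothesis does not apply. This case is unavoidable: for $k$ odd, the frieze obtained by negating all even-length arcs of a positive frieze has \emph{every} length-two arc equal to $-1$, so precisely the friezes the theorem is about defeat the first inductive step. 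Repairing this forces you to enlarge the statement to friezes with frozen $\pm1$ boundary and to classify which boundary sign states are realizable and with what multiplicity --- which is exactly the paper's enlarged algebra $FA_{n-3}$ and the Section~\ref{sec:admis} theorem (realizable iff $n$ is odd or the boundary product is $1$, with $1$ or $2$ extensions respectively).

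There is also a subtler obstacle in the re-gluing step even when the ear is $+1$: knowing that $|y|$ restricted to the sub-polygon satisfies Ptolemy says nothing about quadrilaterals through the folded vertex, and passing absolute values through $y_{\{j,v\}}=y_{\{j-1,v\}}+y_{\{j+1,v\}}$ requires sign information. So ``$|y|$ is a positive frieze'' cannot be established in advance of the sign analysis; the Reduction Lemma is essentially equivalent to the theorem itself. The paper's route avoids this circularity: iterate Lemma~\ref{lem:anarc} to produce one triangulation all of whose arcs are $\pm1$; observe that non-vanishing of entries rules out the configurations of Figure~\ref{fig:obs}, so the signs on that triangulation form an admissible labeling; and then use the fact that Ptolemy propagation from a triangulation with non-zero values is deterministic to bound the number of friezes by (number of triangulations) $\times$ (number of admissible labelings with boundary all $1$), matching the lower bound given by the positive friezes and their $\sigma$-images. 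If you reshape your induction around that triangulation statement rather than around $|y|$ globally, your sign-pattern computation then correctly finishes the count.
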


\begin{theorem}
  For $B_k$ and $C_k$, there are twice the number of non-zero integral friezes
  as positive friezes.
\end{theorem}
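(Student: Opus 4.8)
The plan is to realize every non-zero integral frieze as a global sign-change of a positive one, and then to show that the group of admissible sign-changes has order exactly $2$. Given a positive frieze $f$ and a function $\epsilon$ assigning $\pm 1$ to each cluster variable (with $\epsilon = +1$ on the frozen variables), the assignment $(\epsilon\cdot f)(x) = \epsilon(x)f(x)$ is again a frieze precisely when, for every exchange relation $x x' = M_1 + M_2$, one has $\epsilon(x)\epsilon(x') = \epsilon(M_1) = \epsilon(M_2)$, where $\epsilon(M)$ denotes the product of the signs of the variables occurring in the monomial $M$. Writing $\epsilon = (-1)^e$ turns these conditions into a linear system over $\mathbb{F}_2$ whose solution set is a group $G$; it acts freely on non-zero friezes, and each orbit contains exactly one positive frieze (the representative with $\epsilon = +1$, since flipping any sign of an everywhere-nonzero positive frieze destroys positivity). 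Consequently, once we know that no \emph{exotic} friezes exist, i.e.\ that $|g|$ is a positive frieze for every non-zero frieze $g$, the number of non-zero friezes equals $|G|$ times the number of positive friezes, and the theorem reduces to computing $|G| = 2$.

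For type $C_n$ I would obtain this by folding. I would use that the $C_n$ cluster algebra is the folding of type $A_{2n-1}$ by the central (antipodal) symmetry $\rho$ of the $(2n+2)$-gon, so that a $C_n$-frieze corresponds to a $\rho$-invariant $A_{2n-1}$-frieze, for positive and for non-zero friezes alike. Since $2n-1$ is odd, the already-established theorem for type $A_k$ gives twice as many non-zero as positive $A_{2n-1}$-friezes, a doubling implemented by a sign group $G_A = \{+1,\epsilon^\ast\}$ of order $2$. The key point is that $\epsilon^\ast$ is itself $\rho$-invariant, so the $G_A$-orbit decomposition restricts to $\rho$-invariant friezes: for $g = \epsilon\cdot f$ with $\epsilon \in G_A$ fixed and $f$ positive, $\rho(g) = \epsilon\cdot\rho(f)$, whence $g$ is $\rho$-invariant if and only if $f$ is. Thus the factor of $2$ survives the folding. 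For type $B_n$ I would run the parallel argument through the folding of type $D_{n+1}$ by the involution exchanging the two tagged arcs at the puncture; alternatively, since $B_n$ and $C_n$ share the same unoriented exchange combinatorics, one checks that their admissible sign groups coincide over $\mathbb{F}_2$ and that their positive-frieze counts agree, so the same multiplier $2$ results.

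Two small computations fix the value of $|G|$ and justify the odd-index bookkeeping. For $A_1$ the single relation is $x_1 x_2 = 2$, whose nonzero solutions are $(1,2),(2,1),(-1,-2),(-2,-1)$: four friezes, two positive, matching $|G| = 2$. For $A_2$ the relations $x_{i-1}x_{i+1} = x_i + 1$ force $\epsilon_i = +1$ for every $i$, so $|G| = 1$ and there are no extra friezes, matching the even case. These confirm that the multiplier is governed entirely by $G$ and that it is precisely the odd length $2n-1$ that makes $G$ nontrivial in the $C_n$ folding.

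The main obstacle is the \emph{no exotic friezes} claim: that taking absolute values sends a non-zero frieze to a positive one. Equivalently, in every exchange relation $g(x)g(x') = M_1 + M_2$ the two evaluated monomials $M_1,M_2$ must have the same sign; otherwise $|g|$ fails the positive exchange relation and the orbit count $|G|\cdot(\#\,\text{positive})$ ceases to be exhaustive. I expect to prove this by propagating sign constraints through the frieze: fixing a fundamental domain (a quiddity sequence in the $SL_2$ picture, or a single cluster in the cluster picture) and using the non-vanishing of every entry to show inductively that the sign of each variable is the value at $x$ of an admissible pattern $\epsilon \in G$ applied to the underlying positive frieze. The delicate case is a relation in which one monomial is small and the other large, where mixed signs are numerically conceivable; ruling this out is where the non-vanishing hypothesis and the rigidity of the Dynkin recurrence must be used, and it is the step I expect to require the most care.
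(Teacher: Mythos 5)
Your $C_n$ argument is essentially the paper's: lift through the folding, and observe that the involution $\sigma$ (your $\epsilon^*$, negating arcs of even length) commutes with the antipodal symmetry because diametrically opposed arcs have equal length. Note also that your announced ``main obstacle'' is not actually open in this half: since the relevant polygon has $2n+2$ sides, the paper's $A_{2n-1}$ theorem already counts exactly $2C(2n)$ non-zero friezes, and the $C(2n)$ positive friezes together with their $\sigma$-images exhaust this count; hence every non-zero frieze is positive or $\sigma$ of a positive one, and no fresh sign-propagation argument is needed --- only a citation of the earlier result.

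The genuine gap is in type $B_n$. The ``parallel argument'' is not parallel: the ambient type is $D_{n+1}$, whose sign group is generated by $\sigma_1$ (negate all spokes) and, when $n+1$ is even, also by $\sigma_2$ (negate alternate untagged spokes, the opposite alternate tagged spokes, and even-length boundary arcs), so the $D$-doubling factor is $4$ when $n$ is odd. The crux of the theorem is the computation, which you never make, that $\sigma_2$ and $\sigma_1\circ\sigma_2$ destroy invariance under the folding involution --- that involution swaps each tagged spoke with its untagged partner at the same vertex, and $\sigma_2$ assigns these two arcs opposite signs --- while $\sigma_1$ preserves invariance because it negates tagged and untagged spokes alike; only this check cuts the four sign classes down to the coset $\{1,\sigma_1\}$ and yields the multiplier $2$ rather than $4$ for odd $n$. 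Your fallback argument is false outright: $B_n$ and $C_n$ do not have equinumerous positive friezes (the paper gives $\binom{2n}{n}$ for $C_n$ versus $\sum_{m\leq\sqrt{n+1}}\binom{2n-m^2+1}{n}$ for $B_n$), and since the two types fold from different ambient algebras ($A_{2n-1}$ versus $D_{n+1}$), matching their $\mathbb{F}_2$ sign systems would not transport the orbit count in any case. Finally, for $B_n$ you also need the $D$-analogue of the exhaustion statement --- that every non-zero $D_{n+1}$ frieze lies in the $\langle\sigma_1,\sigma_2\rangle$-orbit of a positive one --- which you leave as the unproven ``no exotic friezes'' step; this is precisely the content of the paper's classification of sign configurations in the $D_n$ section and should be invoked, not deferred.
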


\begin{theorem}
  For $D_k$ there are $4$ times as many non-zero friezes when $k$ is even and
  $2$ times as many when $k$ is odd.
\end{theorem}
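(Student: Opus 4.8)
The plan is to exhibit an explicit group of sign symmetries acting freely on the set of non-zero friezes, with orbit representatives the positive friezes, and then to compute the order of this group. Let $B$ be the exchange matrix of an acyclic seed for the $D_k$ cluster algebra, regarded over $\mathbb{F}_2$. Writing a sign vector as $\epsilon_i = (-1)^{u_i}$ with $u \in \mathbb{F}_2^k$, set
\[
  \Sigma = \{\, \epsilon \in \{\pm1\}^k : B^{T}u \equiv 0 \pmod 2 \,\} \cong \ker_{\mathbb{F}_2} B .
\]
For $\epsilon \in \Sigma$ each normalized coefficient $\hat y_j = \prod_i x_i^{b_{ij}}$ is fixed by the substitution $x_i \mapsto \epsilon_i x_i$, so every $F$-polynomial is fixed and the substitution extends to the cluster automorphism $x_\gamma \mapsto \epsilon^{\,g_\gamma} x_\gamma$, where $g_\gamma$ is the $g$-vector of $x_\gamma$ and $\epsilon^{g_\gamma} \in \{\pm1\}$. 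Consequently $\epsilon$ carries friezes to friezes. I would first verify that this gives a free action of $\Sigma$ on non-zero friezes whose orbit through a positive frieze $f$ meets the positive friezes only in $f$, using that $\epsilon \mapsto (\epsilon^{g_\gamma})_\gamma$ is injective since it restricts to $\epsilon$ on the initial cluster.

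The heart of the argument, and the step I expect to be the main obstacle, is surjectivity: that every non-zero frieze lies in the $\Sigma$-orbit of a positive one. Equivalently, taking absolute values of all cluster variables must again yield a positive frieze, and the resulting sign pattern must lie in $\Sigma$. The danger is cancellation: in an exchange relation $x_\gamma x_{\gamma'} = M_+ + M_-$ the two monomials could a priori be evaluated with opposite signs, so that the absolute values satisfy the \emph{wrong} relation, with the roles of the two diagonals swapped. I would rule this out using the model of $D_k$ by tagged triangulations of a once-punctured $k$-gon, in which cluster variables are tagged arcs, exchange relations are Ptolemy relations, and the boundary arcs are frozen to $1$. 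Starting from the boundary relations, where one monomial is the constant $1$ and hence has fixed positive sign, I would propagate sign-coherence inward across the triangulation, using at each Ptolemy step that all values are non-zero integers to exclude the cancelling configuration. The delicate point is the puncture: the two tagged (plain and notched) arcs at each vertex and the special relations linking them must be handled separately, and it is exactly here that the extra independent sign degree of freedom appears.

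Granting surjectivity, the count is $|\Sigma| \cdot \#\{\text{positive friezes}\}$, so it remains to compute $|\Sigma| = 2^{\dim_{\mathbb{F}_2}\ker B}$. Over $\mathbb{F}_2$ the skew-symmetric matrix $B$ is symmetric with zero diagonal and off-diagonal entries equal to the Dynkin adjacencies, so $B \equiv \mathrm{Adj}(D_k) \pmod 2$ and $\Sigma$ is the $\mathbb{F}_2$-kernel of the adjacency matrix of the $D_k$ Dynkin diagram. Labelling the path $1 - 2 - \cdots - (k-2)$ with fork nodes $k-1, k$ both attached to $k-2$, a kernel vector $v$ satisfies $v_2 = 0$, the path recurrence $v_{i-1} + v_{i+1} = 0$, the fork equations $v_{k-2} = 0$ coming from nodes $k-1$ and $k$, and $v_{k-3} + v_{k-1} + v_k = 0$ from node $k-2$. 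The recurrence forces $v$ to vanish on even-indexed path nodes and to be constant, equal to $v_1$, on odd-indexed ones; the constraint $v_{k-2} = 0$ is then automatic when $k$ is even but forces $v_1 = 0$ when $k$ is odd. Solving the remaining fork equation gives $\dim_{\mathbb{F}_2}\ker B = 2$ for $k$ even and $1$ for $k$ odd, hence $|\Sigma| = 4$ and $2$ respectively, which is exactly the claimed multiplier. I expect the surjectivity and no-cancellation step, and in particular its verification at the puncture, to be where essentially all the work lies; the group-theoretic set-up and the kernel computation are then routine.
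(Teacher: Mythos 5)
Your group-theoretic setup is correct and is a clean repackaging of the paper's explicit symmetries: the twist $x_\gamma \mapsto \epsilon^{g_\gamma}x_\gamma$ for $\epsilon$ in $\Sigma \cong \ker_{\mathbb{F}_2}\mathrm{Adj}(D_k)$ is a genuine frieze symmetry, the action is free, an orbit of a positive frieze contains exactly one positive frieze, and your kernel computation ($\dim = 2$ for $k$ even, $1$ for $k$ odd) is right. These $|\Sigma|$ twists coincide with the paper's involutions $\sigma_1$ (negate all spokes, Lemma~\ref{lem:dninv:1}) and, for $k$ even, $\sigma_2$ (Lemma~\ref{lem:dninv:2}), and they deliver the same lower bound the paper gets: at least $2$ (resp.\ $4$) times as many non-zero friezes as positive ones.

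The gap is the step you yourself flag as carrying all the weight: surjectivity, i.e.\ the upper bound, is not proved, and the route you sketch would fail as stated. Cancellation in a Ptolemy relation cannot be excluded locally: if $ef = ac + bd$ with all values non-zero integers, the monomials $ac$ and $bd$ may evaluate with opposite signs without any contradiction (e.g.\ $3 + (-1) = 2$), so ``propagating sign-coherence inward from the boundary'' stalls as soon as labels of absolute value greater than $1$ appear --- which is generic in a frieze. The exclusion is valid only when all four sides of the quadrilateral have absolute value $1$, since then opposite monomial signs would force a diagonal to vanish (the obstructions of Figure~\ref{fig:obs}). This is precisely why the paper's proof has two further ingredients your outline lacks. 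First, Lemma~\ref{lem:dntriang} (the $D$-analogue of Lemma~\ref{lem:anarc}): every non-zero $FD_n$ frieze with $\pm 1$ boundary contains an entire triangulation whose boundary-to-boundary arcs are labeled $\pm 1$ and whose untagged spokes share one magnitude $|d|$; this is proved by a cyclic convexity argument ($2|f_i| \le |f_{i-1}| + |f_{i+1}|$ around the puncture forces $|f_i|$ constant), not by sign propagation. Second, even after one has such a unit-magnitude cluster, determining the possible sign patterns is a \emph{global} problem: the paper cuts along a spoke and invokes the inductive classification of admissible $\pm 1$-labelings of a polygon from section~\ref{sec:admis}, which shows there are exactly as many patterns as elements of your $\Sigma$, so that some composite of $\sigma_1,\sigma_2$ makes the cluster --- and hence, by rigidity of a frieze on a single cluster, the whole frieze --- positive. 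The puncture subtleties you defer (relations C and D of Figure~\ref{fig:dnrel}, self-folded triangles) are handled in the paper exactly inside these two lemmas. So your multiplier $|\Sigma|$ is the right answer for the right structural reason, but the theorem's actual content --- that there are no non-zero friezes beyond the $\Sigma$-orbits of positive ones --- is missing, and the local no-cancellation argument proposed to supply it is unsound.
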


\begin{theorem}
  For $G_2$ there are $9$ non-zero integral friezes and they are all positive.
\end{theorem}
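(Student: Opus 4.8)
The plan is to realize a non-zero integral frieze for $G_2$ as a bi-infinite sequence $(x_n)_{n\in\mathbb{Z}}$ of non-zero integers obeying the two alternating rank-two exchange relations $x_{n-1}x_{n+1}=x_n+1$ and $x_{n-1}x_{n+1}=x_n^3+1$, one holding at even indices and the other at odd indices. Since $G_2$ has finite type with exactly $8$ cluster variables (its Coxeter number is $6$, and the frieze period is $h+2=8$), any such sequence is periodic with $x_{n+8}=x_n$.

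First I would prove that every non-zero frieze is automatically positive, which simultaneously settles the ``all positive'' claim and pins the ratio of non-zero to positive friezes at $1$. The key observation is a sign recurrence. No entry can equal $-1$: in either exchange relation the substitution $x_n=-1$ makes the right-hand side $0$, forcing one of the non-zero factors on the left to vanish, a contradiction. Writing $\epsilon_n=\mathrm{sign}(x_n)\in\{\pm1\}$ and checking both relations, for every integer $x_n\neq-1$ one has $\mathrm{sign}(x_n^{c}+1)=\mathrm{sign}(x_n)$ with $c\in\{1,3\}$, so $\epsilon_{n-1}\epsilon_{n+1}=\epsilon_n$. Regarded additively in $\mathbb{Z}/2$ this is the Fibonacci recurrence $s_{n+1}=s_n+s_{n-1}$, whose solutions all satisfy $s_{n+3}=s_n$ (the Pisano period modulo $2$ is $3$). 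Combined with the period $s_{n+8}=s_n$ and $\gcd(3,8)=1$, the sign sequence must be constant; the all-negative choice is excluded since it violates the recurrence, so all $x_n$ are positive.

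It then remains to count positive friezes. I would eliminate the odd-index entries using $o_i=e_ie_{i+1}-1$, where $e_0,\dots,e_3$ denote the four even-index values, reducing the even-index relations to the single cyclic system $e_{i-1}e_ie_{i+1}=e_i^2+e_{i-1}+e_{i+1}$ for $i\in\mathbb{Z}/4$. Equivalently, a frieze is pinned down by one seed pair $(a,b)$ subject to $a\mid b+1$ and $b\mid a^3+1$. The heart of the argument is a finiteness bound: evaluating the cyclic system at a largest entry $e_j$ gives $e_{j-1}e_{j+1}\le e_j+2$, a Markov-type inequality that powers a Vieta-jumping descent and confines all solutions to a small range. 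Enumerating yields exactly two solution shapes, the constant cycle $(2,2,2,2)$ and the dihedral orbit of $(1,2,5,3)$; translating back through the nine admissible seed pairs $(1,1),(2,9),(5,14),(3,2),(1,2),(3,14),(5,9),(2,1),(2,3)$ produces exactly $9$ friezes, all positive.

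I expect the main obstacle to be this finiteness and enumeration step rather than the positivity step: one must make the Markov-style descent on the system $e_{i-1}e_ie_{i+1}=e_i^2+e_{i-1}+e_{i+1}$ fully rigorous, ruling out large seed pairs that satisfy the divisibility constraints and confirming that each of the nine surviving pairs genuinely closes up to a period-$8$ positive integer sequence. The sign argument, by contrast, is short and robust, working verbatim no matter how the cube and linear relations are distributed between even and odd indices.
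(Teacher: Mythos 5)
Your proposal is correct in outline and takes a genuinely different route from the paper. The paper never touches the rank-two recurrence: it folds $D_4$ by the order-$3$ diagram automorphism $G$, lifts a non-zero $G_2$ frieze to a $G$-invariant non-zero $D_4$ frieze, invokes its $D_n$ classification (every non-zero frieze is a positive one twisted by $\sigma_1$, $\sigma_2$ or $\sigma_1\circ\sigma_2$), and checks that none of the three non-trivial twists of a $G$-invariant positive $D_4$ frieze stays $G$-invariant ($\sigma_2$ and $\sigma_1\circ\sigma_2$ break the sign-matching of the identified tagged/untagged spokes, and $\sigma_1$ negates the spokes but not the identified diagonal); the count $9$ is then imported from the positive count of \cite{FP}. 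You instead work intrinsically with the $8$-periodic sequence $x_{n-1}x_{n+1}=x_n^{c_n}+1$, $c_n\in\{1,3\}$ alternating. Your positivity argument is complete and correct as stated: $x_n=-1$ is impossible since it would zero a product of non-zero integers, so $\mathrm{sign}(x_n^{c}+1)=\mathrm{sign}(x_n)$ gives $\epsilon_{n-1}\epsilon_{n+1}=\epsilon_n$, and the Fibonacci-mod-$2$ period $3$ against the frieze period $8$ with $\gcd(3,8)=1$ forces constant, hence all-positive, signs. Pleasingly, both proofs hinge on an arithmetic accident special to $G_2$ (for you, $3\nmid 8$; for the paper, the failure of $G$-invariance of every sign twist), which is why $G_2$ alone has ratio $1$. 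Your counting step re-proves the positive count $9$, which the paper merely cites; the reduction to $e_{i-1}e_ie_{i+1}=e_i^2+e_{i-1}+e_{i+1}$ on $\mathbb{Z}/4$ is algebraically correct, and the descent you flag as the main obstacle does close, in fact without Vieta jumping: at a maximal entry $M=e_j$ the equation gives $e_{j-1}+e_{j+1}=e_j(e_{j-1}e_{j+1}-e_j)$, so $e_j\mid e_{j-1}+e_{j+1}\le 2M$; the case $e_{j-1}+e_{j+1}=2M$ forces $M=2$ and the constant solution $(2,2,2,2)$, while $e_{j-1}+e_{j+1}=M$, $e_{j-1}e_{j+1}=M+1$ makes $e_{j\pm1}$ roots of $t^2-Mt+(M+1)$, whose discriminant $(M-2)^2-8$ is a square only for $M=5$, yielding neighbors $\{2,3\}$ and fourth entry $1$; together with the direct analysis of an entry equal to $1$ (which gives $(s-1)(q-1)=2$, hence $(1,2,5,3)$ up to the dihedral action), the only solutions are your two shapes, and the nine seed pairs you list are exactly the nine friezes, all positive. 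The trade-off: your argument is elementary and self-contained modulo the standard period-$(h+2)$ fact for finite-type rank two, whereas the paper's folding argument is shorter given the $D_n$ machinery already built, and its method extends uniformly to $B_n$ and $C_n$, where non-positive friezes do exist and your sign recurrence has no analogue.
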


I would like to thank Pierre-Guy Plamondon, Allen Knutson and Richard Stanley
for useful discussions and the Sage mathematics community for their software and
its cluster algebra implementation.

\section{Admissible labelings of triangulations}\label{sec:admis}
Consider a triangulation of an $n$-gon whose arcs (including boundary arcs) are
labeled by $\pm 1$. A labeling is said to be \textbf{admissible} if for any
$4$-cycle, the product of the labels around the cycle is $1$.

The \textbf{boundary state} of a labeled triangulation is simply the set of
boundary labels considered with their cyclic order. It will be of interest to
understand the structure of admissible labelings. Specifically, we want to know
what the possible boundary states for an admissible labeling are and given such
a boundary state, which admissible labelings have that boundary state.

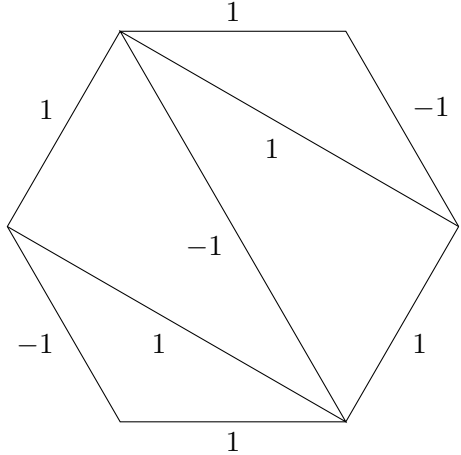
\begin{figure}
\begin{tikzpicture}
\draw (0:3) -- node[midway,above right] {$-1$} (60:3) -- node[midway, above] {$1$} (120:3) -- node[midway, above left] {$1$} (180:3) -- node[midway, below left] {$-1$} (240:3) -- node[midway, below] {$1$} (300:3) -- node[midway, below right] {$1$} (0:3);
\draw (0:3) -- node[midway, below left] {$1$} (120:3) -- node[midway, below left] {$-1$} (300:3) -- node[midway, below left] {$1$} (180:3);
\end{tikzpicture}
\caption{An admissible labeling of a $6$-gon with boundary state $-1,1,1,-1,1,1$}
\end{figure}

\begin{lemma}\label{lem:admisinv}
  Given an admissible labeling of a $n$-gon where $n$ is even, the labeling
  formed by negating all arcs of even length (i.e. the end points of the arc
  have an odd number of vertices between them) gives another admissible
  labeling.
\end{lemma}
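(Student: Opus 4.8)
The plan is to reduce admissibility, which is a multiplicative condition on the four sides of each $4$-cycle, to a parity count on the lengths of those sides. Write the new label of an arc $a$ as $\epsilon'_a = (-1)^{\chi(a)}\epsilon_a$, where $\epsilon_a \in \{\pm 1\}$ is the original label and $\chi(a) = 1$ if $a$ has even length and $0$ otherwise. For a $4$-cycle with sides $p,q,r,s$ the new product around the cycle is $\epsilon'_p\epsilon'_q\epsilon'_r\epsilon'_s = (-1)^{\chi(p)+\chi(q)+\chi(r)+\chi(s)}\,\epsilon_p\epsilon_q\epsilon_r\epsilon_s$. Since the original labeling is admissible we have $\epsilon_p\epsilon_q\epsilon_r\epsilon_s = 1$, so it suffices to show that every $4$-cycle contains an even number of even-length arcs; then $(-1)^{\chi(p)+\cdots+\chi(s)} = 1$ and the product around every $4$-cycle is again $1$, i.e. the new labeling is admissible.

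First I would record that, because $n$ is even, the parity of the length of an arc is well defined: an arc joining two vertices cuts the boundary into two paths of lengths $\ell$ and $n-\ell$, and $\ell \equiv n-\ell \pmod 2$ precisely when $n$ is even. Labeling the vertices cyclically by $\mathbb{Z}/n\mathbb{Z}$ and assigning to each vertex its residue mod $2$, an arc then has even length if and only if its two endpoints have the same parity. This is the only place the hypothesis that $n$ is even is used, and it is essential, since for odd $n$ the operation is not even well defined.

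The key combinatorial step is the parity count on a single $4$-cycle. Any $4$-cycle passes through four vertices $v_1,v_2,v_3,v_4$ in cyclic order, with parities $\pi_1,\pi_2,\pi_3,\pi_4 \in \{0,1\}$, and its sides are the arcs $v_iv_{i+1}$. By the previous step the number of even-length sides equals the number of cyclically adjacent pairs with equal parity. I would compute the complementary quantity: the number of adjacent unequal pairs satisfies $\sum_{i}[\pi_i \neq \pi_{i+1}] \equiv \sum_i (\pi_i + \pi_{i+1}) = 2(\pi_1+\pi_2+\pi_3+\pi_4) \equiv 0 \pmod 2$, so the number of parity transitions around the cycle is even; as there are four sides in all, the number of equal-parity pairs is also even. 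Hence $\chi(p)+\chi(q)+\chi(r)+\chi(s) \equiv 0 \pmod 2$ for every $4$-cycle, which is exactly what the reduction in the first paragraph requires.

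I expect the main obstacle to be conceptual rather than computational. One must be careful that ``even length'' is genuinely well defined, which is exactly where the hypothesis that $n$ is even is consumed, and one must confirm that every $4$-cycle really does pass through four polygon vertices in cyclic order so that the parity-of-transitions count applies uniformly (in a triangulation these are precisely the quadrilaterals formed by two triangles sharing a diagonal, with the four sides honest arcs). Once these points are pinned down, the multiplicative admissibility condition collapses to the elementary observation that a closed cyclic walk makes an even number of parity changes, and boundary arcs, having odd length $1$, are automatically left unchanged.
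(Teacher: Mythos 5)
Your proof is correct and takes essentially the same approach as the paper: both reduce admissibility to the claim that every $4$-cycle contains an even number of even-length arcs, after which the sign changes cancel in the product of labels. The only difference is cosmetic --- the paper obtains the parity count by observing that the four side lengths of the $4$-cycle sum to $n$, which is even, while you derive it from vertex parities and the evenness of the number of parity transitions around a closed cyclic walk; these are equivalent one-line computations.
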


\begin{proof}
  Given a 4 cycle in the triangulation, since the set of lengths of these arcs
  adds to $n$, which is even, it follows that there are an even number of arcs
  of even length in the 4 cycle. But then the negation of the signs on the arcs
  of even length cancels when we take the product of all the labels on the 4
  cycle. Thus the product of the labels is still one for each 4 cycle.
\end{proof}

Note that this operation acts as an involution on the set of admissible
labellings.

\begin{theorem}
  Given a triangulation of an $n$-gon and a boundary state consisting of $\pm
  1$, it extends to an admissible labeling of the triangulation if and only if
  $n$ is odd or $n$ is even and the product of the boundary state is $1$. More
  over, when $n$ is odd, there is a unique extension to an admissible labeling
  of the triangulation and when $n$ is even and the product is $1$, there are
  exactly two extensions related by the involution of lemma~\ref{lem:admisinv}
\end{theorem}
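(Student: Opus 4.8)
My plan is to linearize the entire problem over $\mathbb{F}_2$. Encoding a label $+1$ as $0$ and $-1$ as $1$ turns each multiplicative $4$-cycle condition into an additive one. As I observed above, every $4$-cycle in a triangulation of a convex polygon is the boundary of the quadrilateral cut out by a single interior diagonal $d$ (two adjacent triangles), and this quadrilateral $Q_d$ has the diagonal $d$ as a \emph{chord}, not as one of its four sides. Hence each of the $n-3$ admissibility equations reads $\sum_{a\in\partial Q_d} s(a)=0$, where $s$ is the $\mathbb{F}_2$-valued label and $d$ itself does not occur. Writing the boundary labels as fixed data and the diagonal labels as unknowns $x\in\mathbb{F}_2^{n-3}$, I get an affine system $Mx=b$, where $M_{d'd}=1$ exactly when $d\neq d'$ share a triangle (equivalently $d$ is a side of $Q_{d'}$), and $b_{d'}=\sum s(a)$ over the \emph{boundary} sides of $Q_{d'}$. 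The matrix $M$ is symmetric.

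Everything then reduces to two facts about $M$: since $M$ is symmetric, $\operatorname{im}(M)=(\ker M)^\perp$, so $Mx=b$ is solvable iff $b\perp\ker M$, and when solvable the solution set is a coset of $\ker M$ of size $2^{\dim\ker M}$. Thus I must show $\dim\ker M=0$ when $n$ is odd and $\dim\ker M=1$ when $n$ is even, and I must identify the solvability condition $b\perp\ker M$ with ``the boundary product is $1$'' in the even case.

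The key algebraic step is the factorization $M=B^{\top}B$ over $\mathbb{F}_2$, where $B$ is the (triangle)$\times$(diagonal) incidence matrix. Indeed $(B^{\top}B)_{dd'}$ counts common triangles mod $2$: an interior diagonal lies in two triangles, so the diagonal entries vanish, and two distinct diagonals share at most one triangle, recovering $M$. Now each column of $B$ is $e_t+e_{t'}$ for the two triangles adjacent to a diagonal; these are the edge vectors of the dual tree on the $n-2$ triangles, and since that tree is connected they span the even-weight subspace $\mathbf 1^{\perp}$, of dimension $n-3$. As there are exactly $n-3$ diagonals, $B$ is injective, so $\dim\ker(B^{\top}B)=\dim\big(\operatorname{im}B\cap(\operatorname{im}B)^{\perp}\big)=\dim\big(\mathbf 1^{\perp}\cap\langle\mathbf 1\rangle\big)$. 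This is $1$ exactly when $\mathbf 1\in\mathbf 1^{\perp}$, i.e. when $n-2$ is even, and $0$ otherwise; that is, $\dim\ker M=1$ for $n$ even and $0$ for $n$ odd. For the even case I will exhibit the kernel generator concretely using Lemma~\ref{lem:admisinv}: the labeling that negates every even-length arc is admissible, and since boundary arcs have length $1$ (odd) it is supported on the even-length diagonals; so $z=$ (indicator of even-length diagonals) is a nonzero element of $\ker M$, and by the dimension count $\ker M=\langle z\rangle$.

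It remains to match $b\perp z$ with the boundary product, and this is where I expect the only real bookkeeping. I compute $\langle b,z\rangle=\sum_{\ell(d')\ \mathrm{even}}b_{d'}=\sum_{a\ \mathrm{boundary}} s(a)\cdot m_a$, where $m_a$ is the number of even-length diagonals $d'$ with $a\in\partial Q_{d'}$, i.e. the number of even-length diagonal edges of the unique triangle $t_a$ containing the boundary arc $a$, other than $a$. The three arc-lengths of $t_a$ sum to $n$, and $\ell(a)=1$, so the other two lengths sum to $n-1$, which is odd for $n$ even; hence exactly one of them is even, and being even it cannot be a boundary edge (length $1$), so it is a diagonal. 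Thus $m_a=1$ for every boundary arc, giving $\langle b,z\rangle=\sum_{a\ \mathrm{boundary}}s(a)$, which vanishes precisely when the boundary product is $1$. Combining: for $n$ odd, $M$ is invertible and every boundary state extends uniquely; for $n$ even, an extension exists iff the boundary product is $1$, in which case there are exactly $2^{\dim\ker M}=2$ extensions, differing by adding $z$, i.e. by the involution of Lemma~\ref{lem:admisinv}. The main obstacle is pinning $\dim\ker M$ down exactly---the factorization $M=B^{\top}B$ and the radical-of-the-form computation is what makes the upper bound on the kernel clean---together with the length-parity count identifying the abstract condition $b\perp\ker M$ with the boundary product.
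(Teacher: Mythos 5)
Your proof is correct, but it takes a genuinely different route from the paper. The paper argues by induction on $n$: it cuts an ear (a length-$2$ arc) off the triangulation, applies the inductive hypothesis to the resulting $(n-1)$-gon, and does an explicit parity case analysis (the computation $de=gh$ independent of the choice of the ear label $c$) to decide which extensions survive. You instead linearize over $\mathbb{F}_2$ and solve the problem globally in one stroke: the bijection between $4$-cycles and interior diagonals makes admissibility an affine system $Mx=b$ with $M$ symmetric, and the Gram factorization $M=B^{\top}B$ through the dual tree pins down $\dim\ker M$ exactly ($0$ for $n$ odd, $1$ for $n$ even), after which the counts $1$ and $2=2^{\dim\ker M}$, the solvability criterion $b\perp\ker M$, and the identification of the two even-case solutions as differing by the involution of Lemma~\ref{lem:admisinv} (the kernel generator $z$) all fall out structurally rather than case by case. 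Your approach buys a conceptual explanation of \emph{why} the answer is $1$ or $2$ and why the involution is exactly the ambiguity; the paper's induction is more elementary and produces along the way local identities that it reuses later. Two small points you should patch, both one-liners: (i) you assert rather than prove that every $4$-cycle is $Q_d$ for a unique interior diagonal $d$ --- this needs the observation that in a convex polygon the region bounded by four non-crossing arcs contains no other vertices, hence is triangulated by exactly one of its diagonals, which must be an arc of the triangulation; (ii) you assert $z\neq 0$, which requires knowing that an even-length diagonal exists for even $n\geq 4$ --- immediate since every triangulation has an ear, whose diagonal has length $2$ (and $z\neq 0$ is genuinely needed, both for $\ker M=\langle z\rangle$ and for the claim that the two extensions are related by a \emph{non-trivial} involution).
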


\begin{proof}
  In the case of $n=3$, this is true, since the triangulation is trivial, there
  are only the boundary labels and every state leads to an admissible labeling.

  When $n=4$, since the boundary is a $4$-cycle, a labeling is admissible if and
  only if the product of the boundary labels is $1$. The single internal arc of
  the triangulation can be labeled $\pm 1$ giving rise to exactly two admissible
  labelings for each boundary state. Since the internal arc is of length $2$,
  these two are related by lemma~\ref{lem:admisinv}.

  For general $n>4$ we proceed via induction. Note that any triangulation must
  have at least one arc of length $2$. Consider cutting the triangle that the
  arc forms off the triangulation. The result is a triangulation of an
  $n-1$-gon. Now we break into two cases, based on the parity of $n$.

$$\begin{tikzpicture}
  \draw (0,0) -- node[midway, left] {$b_1$} (0,2) -- node[midway, above right] {$c$} (2,0) -- node[midway, below] {$b_2$} (0,0);
  \draw (1,4) -- node[midway, left] {$b_n$} (1,3) -- node[midway, below left] {$c$} (3,1) -- node[midway, below] {$b_3$} (4,1);
  \fill ($ 0.5*(4.293,5.707) + 0.5*(1,4) + 0.2*(-1.707,1.293) $) circle (.035);
  \fill ($ 0.4*(4.293,5.707) + 0.6*(1,4) + 0.2*(-1.707,1.293) $) circle (.035);
  \fill ($ 0.6*(4.293,5.707) + 0.4*(1,4) + 0.2*(-1.707,1.293) $) circle (.035);
  \fill ($ 0.5*(5.707,4.293) + 0.5*(4,1) + 0.2*(1.293,-1.707) $) circle (.035);
  \fill ($ 0.4*(5.707,4.293) + 0.6*(4,1) + 0.2*(1.293,-1.707) $) circle (.035);
  \fill ($ 0.6*(5.707,4.293) + 0.4*(4,1) + 0.2*(1.293,-1.707) $) circle (.035);
  \draw (1,3) -- node[midway, above left] {$d$} (5,5) -- node[midway, below right] {$e$} (3,1);
  \draw (4.293,5.707) -- node[midway, above right] {$b_{k+1}$} (5,5) -- node[midway, above right] {$b_k$} (5.707,4.293);
\end{tikzpicture}$$

  Let $n$ be odd and pick an arbitrary boundary state $a_i$. If there exists an
  admissible labeling for this boundary state, its restriction to the $n-1$-gon
  will be an admissible labeling. Since $n-1$ is even, by induction the only way
  this can happen is if $c=\prod_{i=3}^nb_i$, since $c,b_3,\dots,b_n$ is the
  boundary state for the $n-1$-gon. By induction there are two possible
  admissible labelings that extend this boundary state. Note that exactly one of
  the two edges labeled $e$ and $d$ is even length and the other is odd. Since
  the two labelings are related by lemma~\ref{lem:admisinv} it follows that in
  one labeling $de=1$ and in the other $de=-1$. Since $b_1b_2$ is fixed, only
  one of the two satisfies $de=b_1b_2$. I.e. there is one admissible labeling
  for the boundary state $b_i$.

  If $n$ is even, once again start with a boundary state $b_i$. Given a choice
  of $c$, by induction it uniquely extends to an admissible labeling of the
  $n-1$-gon. Thus we have at most two admissible labelings of the $n$-gon with
  boundary $b_i$. In order for this labeling to be admissible we must have
  $b_1b_2=de$.

  Let $g=\prod_{i=2}^kb_i$ and $h=\prod_{i=k+1}^nb_i$. Consider the edges labeled
  $d$ and $e$. They are either both of even length or odd length. If they are of
  odd length, consider the polygon formed from $e$ and the boundary edges $b_2$
  to $b_k$ and the polygon formed from $d$ and the boundary edges $b_{k+1}$ to
  $b_n$. These are both even, so $eg=dh=1$. Otherwise if they are of even length
  consider the polygons formed by $e$, $c$ and $b_{k+1}$ to $b_n$ and by $d$,
  $c$ and $b_2$ to $b_k$. Since they are both even, we have $ech=dcg=1$. In
  either case this gives $de=gh$. Note that this is independent of our choices
  of $c$.

  Thus if $\prod b_i\neq 1$, then $b_1b_2\neq gh=de$ and there are no
  extensions. Otherwise there are exactly $2$ as determined by the sign of $c$.
\end{proof}

\section{Non-zero integral $A_{n-3}$ friezes}

In \cite{FZ1}, it is shown that the clusters in the type $A_{n-3}$ cluster
algebra are associated to the triangulations of $n$-gon with the boundary arcs
labeled $1$. The cluster variables are the arcs in triangulations and the
relations (mutations) between them are the Ptolemy relation on quadrilaterals.

A \textbf{non-zero integral frieze} is an evaluation of the cluster variables
such that each cluster variable evaluates to a non-zero integer. In the language
of triangulations of an $n$-gon, this corresponds to labeling each internal arc
with a non-zero integer such that the labeling satisfies the Ptolemy relation.

In order to understand non-zero integral friezes of type $A_{n-3}$ we will need
to consider friezes of a slightly enlarged cluster algebra $FA_{n-3}$ where we
add a frozen variable for each boundary arc. The value of the frozen variables
corresponding to the boundary arcs will be called the \textbf{boundary state}
and thus the non-zero integral friezes of type $FA_{n-3}$ with boundary state
$1,\dots,1$ will be the non-zero integral friezes of type $A_{n-1}$.

\begin{theorem}
  There are $2C(n-2)$ non-zero integral $A_{n-3}$ friezes when $n$ is even and
  $C(n-2)$ when $n$ is odd.
\end{theorem}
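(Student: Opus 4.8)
The plan is to factor every non-zero frieze into a positive frieze times a sign pattern, and then to count the two pieces separately. Fixing the reference polygon to be the $n$-gon (so the cluster variables are the internal arcs and the boundary arcs carry the value $1$), I would associate to a non-zero integral frieze $x$ the two arc labelings $|x|$ (absolute values) and $\varepsilon:=\mathrm{sign}(x)$, the latter taking values in $\pm1$ and being $\equiv 1$ on the boundary. The aim is to prove that $x\mapsto(|x|,\varepsilon)$ is a bijection onto pairs consisting of a positive frieze and an admissible labeling in the sense of Section~\ref{sec:admis} with boundary state $1,\dots,1$. Granting this, the count is immediate: by Coxeter--Conway \cite{CC} (equivalently \cite{FZ1}) there are $C(n-2)$ positive friezes, while by the theorem of Section~\ref{sec:admis} an admissible labeling of a fixed triangulation with boundary state $1,\dots,1$ is unique when $n$ is odd and comes in exactly two flavours (related by the involution of Lemma~\ref{lem:admisinv}) when $n$ is even; a short propagation-through-flips argument upgrades this to the same count of \emph{global} admissible sign patterns. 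Multiplying yields $C(n-2)$ for $n$ odd and $2C(n-2)$ for $n$ even.

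The easy half of the bijection is the construction of non-zero friezes by twisting. Given a positive frieze $p$ and a $\pm1$ arc labeling $\varepsilon$, the twist $\varepsilon\cdot p$ satisfies the Ptolemy relation $x_{13}x_{24}=x_{12}x_{34}+x_{23}x_{14}$ on a quadrilateral $1\,2\,3\,4$ precisely when $\varepsilon_{13}\varepsilon_{24}=\varepsilon_{12}\varepsilon_{34}=\varepsilon_{23}\varepsilon_{14}$. For $\varepsilon\equiv 1$ this is vacuous, and for the involution of Lemma~\ref{lem:admisinv} it is a one-line parity check: writing $\ell_{ab}$ for the length of an arc and using $\ell_{13}=\ell_{12}+\ell_{23}$, $\ell_{24}=\ell_{23}+\ell_{34}$ together with $\ell_{12}+\ell_{23}+\ell_{34}+\ell_{14}=n$, all three products equal $(-1)^{\ell_{12}+\ell_{34}}$ once $n$ is even. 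Hence twisting carries positive friezes to non-zero friezes, and since $p=|\varepsilon\cdot p|$ and $\varepsilon=\mathrm{sign}(\varepsilon\cdot p)$ the map is injective; this already exhibits $(1\text{ or }2)\cdot C(n-2)$ distinct non-zero friezes.

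The substance is the reverse direction, which I expect to be the main obstacle: that \emph{every} non-zero frieze arises this way, equivalently that there is never any cancellation in a Ptolemy relation. Precisely, for a non-zero frieze $x$ with boundary state $1,\dots,1$ I must show that in each quadrilateral the two summands $x_{12}x_{34}$ and $x_{23}x_{14}$ share a sign; then $|x_{13}x_{24}|=|x_{12}x_{34}|+|x_{23}x_{14}|$, so $|x|$ is a positive frieze, and $\varepsilon_{12}\varepsilon_{23}\varepsilon_{34}\varepsilon_{14}=1$ on every $4$-cycle, so $\varepsilon=\mathrm{sign}(x)$ is admissible. This no-cancellation statement is genuinely false for arbitrary boundary states (for $n=4$ the boundary $2,1,1,-1$ admits a non-zero frieze with $x_{13}x_{24}=1$), so the argument must use that the sign boundary state extends to an admissible labeling, i.e. the parity hypothesis of the Section~\ref{sec:admis} theorem.

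To attack no-cancellation I would induct on $n$ exactly as in the proof of the Section~\ref{sec:admis} theorem, clipping a triangle along an ear $13$ of length $2$. Restricting a non-zero $FA_{n-3}$ frieze to the resulting $(n-1)$-gon gives a non-zero $FA_{n-4}$ frieze whose new boundary edge carries the value $x_{13}$; the crucial point is that when $n$ is even the clipped polygon has odd size, so Section~\ref{sec:admis} imposes \emph{no} parity constraint and the inductive hypothesis applies unconditionally, whereas when $n$ is odd one must first force the sign of the ear $x_{13}$ to be positive so as to land in the "product $1$" case. The delicate bookkeeping is in the arcs incident to the clipped vertex $2$, which live in the $n$-gon but not in the $(n-1)$-gon; I would pin down their signs by applying the uniqueness/two-fold classification of Section~\ref{sec:admis} to the quadrilaterals through vertex $2$. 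A less self-contained alternative would invoke Laurent positivity for type $A$: each cluster variable is a subtraction-free Laurent polynomial in a fixed cluster, and showing the associated monomials evaluate positively under $x$ forces $|x|$ to be the positive frieze with the corresponding absolute-value seed.
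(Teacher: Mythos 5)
Your easy half (twisting a positive frieze by a sign pattern) is fine and agrees with the paper's Lemma~\ref{lem:neg}, but the hard half --- that \emph{every} non-zero frieze factors as a positive frieze times an admissible sign pattern, via no-cancellation in each Ptolemy relation --- rests on a false inductive hypothesis. You propose to clip an ear $13$ and apply the hypothesis to the resulting smaller frieze, asserting that for an odd-size clipped polygon it ``applies unconditionally'' because the sign boundary state always extends admissibly. That statement is false as soon as a boundary value has modulus $\geq 2$, which the new boundary value $x_{13}$ may well have. Concretely, take the pentagon with boundary $b_{12}=2$, $b_{23}=b_{34}=b_{45}=b_{15}=1$ and diagonals $x_{13}=x_{24}=1$, $x_{14}=-1$, $x_{35}=-2$, $x_{25}=-3$. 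All five Ptolemy relations hold: $x_{13}x_{24}=1=2\cdot 1+1\cdot(-1)$, $x_{13}x_{25}=-3=2\cdot(-2)+1$, $x_{14}x_{25}=3=2+1$, $x_{14}x_{35}=2=1+1$, and $x_{24}x_{35}=-2=1+(-3)$. This is a non-zero $FA_2$ frieze on an odd polygon with all boundary signs positive (so the sign boundary state extends admissibly, uniquely, to the all-plus labeling), yet the quadrilateral $1234$ exhibits exactly the cancellation you must rule out, and $\mathrm{sign}(x)$ is not admissible. So the induction cannot proceed on parity/sign hypotheses alone; the obstruction your $n=4$ example revealed is governed by magnitudes, not just by extendability of the boundary signs.

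The missing idea is precisely the paper's Lemma~\ref{lem:anarc}: a Coxeter--Conway-style extremal estimate showing that \emph{some} ear satisfies $|a_i|<|b_i|+|b_{i+1}|$, hence, when the boundary is $\pm 1$, some ear is labeled $\pm 1$. Clipping \emph{that} ear (rather than an arbitrary one) keeps the boundary state inside $\{\pm 1\}$, so the induction never leaves the regime where it is valid, and iterating produces an entire triangulation labeled $\pm 1$ inside the frieze. The paper then needs admissibility of the sign configuration only on this one triangulation --- immediate because the configurations of Figure~\ref{fig:obs} would force a zero label --- and gets the upper bound by counting pairs (triangulation, admissible labeling of it) via Section~\ref{sec:admis}, since a frieze is determined by its values on one cluster; no global no-cancellation statement for all quadrilaterals is ever proved or needed. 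Your fallback via Laurent positivity is likewise circular: positivity of the Laurent expansions only yields positivity of $|x|$ once you know the values on some cluster are all positive (or have admissible signs), which is exactly the point in question. To repair your argument, replace ``clip any ear'' by Lemma~\ref{lem:anarc} and restrict the whole induction to $\pm 1$ boundary states; at that point you have essentially reconstructed the paper's proof.
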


Note that the the $C(n-2)$ positive friezes of Coxeter-Conway are the portion of
non-zero integral friezes that are positive. When $n$ is even one can obtain the
remaining $C(n-2)$ non-zero integral friezes from the positive integral friezes
in the following way:

\begin{lemma}\label{lem:neg}
  If $n$ is even, there exists a non trivial involution $\sigma$ on the set of
  non-zero integral $FA_{n-3}$ friezes: Negate all labels in the frieze on arcs
  of even length.
\end{lemma}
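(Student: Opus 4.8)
The plan is to verify three things: that $\sigma$ sends non-zero integral $FA_{n-3}$ friezes to non-zero integral $FA_{n-3}$ friezes, that $\sigma\circ\sigma$ is the identity, and that $\sigma$ is not itself the identity. The second point is immediate, since applying $\sigma$ twice multiplies every even-length arc by $(-1)^2=1$ and fixes the odd-length arcs, while the values stay in $\mathbb{Z}\setminus\{0\}$ throughout because negation preserves non-zero integers. The substance of the lemma is the first point, which is a signed analogue of Lemma~\ref{lem:admisinv}.

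To prove that $\sigma$ preserves the frieze condition, I would check that each Ptolemy relation is preserved. It suffices to consider an arbitrary quadrilateral with vertices $v_1,v_2,v_3,v_4$ listed in cyclic order along the boundary of the $n$-gon; writing $a_{ij}$ for the frieze value on the arc $v_iv_j$, the Ptolemy relation reads
$$a_{13}a_{24}=a_{12}a_{34}+a_{23}a_{41}.$$
For each arc let $\epsilon_{ij}\in\{\pm1\}$ be the sign introduced by $\sigma$, so that $\epsilon_{ij}=-1$ exactly when $v_iv_j$ has even length; equivalently $\epsilon_{ij}=(-1)^{\ell+1}$ when $v_iv_j$ has length $\ell$. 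Applying $\sigma$ multiplies the left-hand side by $\epsilon_{13}\epsilon_{24}$ and the two summands on the right by $\epsilon_{12}\epsilon_{34}$ and $\epsilon_{23}\epsilon_{41}$ respectively, so the relation survives for every frieze precisely when
$$\epsilon_{13}\epsilon_{24}=\epsilon_{12}\epsilon_{34}=\epsilon_{23}\epsilon_{41}.$$

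I would establish this sign identity by the length-counting idea of Lemma~\ref{lem:admisinv}. Let $\ell_1,\ell_2,\ell_3,\ell_4$ be the lengths of the sides $v_1v_2,v_2v_3,v_3v_4,v_4v_1$, so $\ell_1+\ell_2+\ell_3+\ell_4=n$, while the diagonals $v_1v_3$ and $v_2v_4$ have lengths $\ell_1+\ell_2$ and $\ell_2+\ell_3$. A direct substitution gives $\epsilon_{12}\epsilon_{34}=(-1)^{\ell_1+\ell_3}$, $\epsilon_{23}\epsilon_{41}=(-1)^{\ell_2+\ell_4}$, and $\epsilon_{13}\epsilon_{24}=(-1)^{(\ell_1+\ell_2)+(\ell_2+\ell_3)}=(-1)^{\ell_1+\ell_3}$, so the first and last products agree automatically. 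The remaining equality $(-1)^{\ell_1+\ell_3}=(-1)^{\ell_2+\ell_4}$ holds because $(\ell_1+\ell_3)+(\ell_2+\ell_4)=n$ is even, forcing $\ell_1+\ell_3$ and $\ell_2+\ell_4$ to share parity. This is the single place where the hypothesis that $n$ is even is used, and I expect the only (minor) obstacle to be keeping this parity bookkeeping straight across boundary and internal arcs at once; the computation is uniform in both, since it depends only on the lengths.

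Finally, for non-triviality I would exhibit one even-length arc that $\sigma$ genuinely moves. As $n\ge 4$ is even, the arc joining $v_1$ to $v_3$ has length $2$, hence is an internal arc carrying some non-zero value $a$, and $\sigma$ replaces it by $-a\ne a$; thus $\sigma$ is not the identity. I would also note that $\sigma$ fixes every boundary arc, each having odd length $1$, so $\sigma$ preserves the boundary state; in particular it restricts to an involution on the friezes with boundary state $1,\dots,1$, that is, on the non-zero integral $A_{n-3}$ friezes.
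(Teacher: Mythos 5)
Your proposal is correct and follows essentially the same route as the paper: both verify, quadrilateral by quadrilateral, that the Ptolemy relation is preserved by a parity count of arc lengths, with $n$ even guaranteeing the signs cancel coherently. Your uniform computation $\epsilon_{13}\epsilon_{24}=\epsilon_{12}\epsilon_{34}=\epsilon_{23}\epsilon_{41}=(-1)^{\ell_1+\ell_3}$ is just a cleaner packaging of the paper's four-case analysis (all odd, all even, alternating, two-and-two), and your explicit checks of involutivity, non-triviality, and preservation of the boundary state fill in details the paper leaves implicit.
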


\begin{figure}
\begin{tikzpicture}
\draw (180:2) -- node[midway, above left, scale=0.7] {$1$} (135:2) -- node[midway, above, scale=0.7] {$1$} (90:2) -- node[midway, above, scale=0.7] {$1$} (45:2) -- node[midway, above right, scale=0.7] {$1$} (0:2);
\draw (180:2) -- node[pos=0.25, above, scale=0.7] {$1$} (90:2) -- node[pos=0.75, above, scale=0.7] {$2$} (0:2);
\draw (135:2) -- node[midway, below, scale=0.7] {$2$} (45:2);
\draw (180:2) -- node[midway, below, scale=0.7] {$1$} (45:2);
\draw (135:2) -- node[midway, below, scale=0.7] {$3$} (0:2);
\end{tikzpicture} $\rightarrow$ \begin{tikzpicture}
\draw (180:2) -- node[midway, above left, scale=0.7] {$1$} (135:2) -- node[midway, above, scale=0.7] {$1$} (90:2) -- node[midway, above, scale=0.7] {$1$} (45:2) -- node[midway, above right, scale=0.7] {$1$} (0:2);
\draw (180:2) -- node[pos=0.25, above, scale=0.7] {$-1$} (90:2) -- node[pos=0.75, above, scale=0.7] {$-2$} (0:2);
\draw (135:2) -- node[midway, below, scale=0.7] {$-2$} (45:2);
\draw (180:2) -- node[midway, below, scale=0.7] {$1$} (45:2);
\draw (135:2) -- node[midway, below, scale=0.7] {$3$} (0:2);
\end{tikzpicture}
\caption{Transforming a positive integral frieze into a non-zero frieze containing negative entries.}
\end{figure}
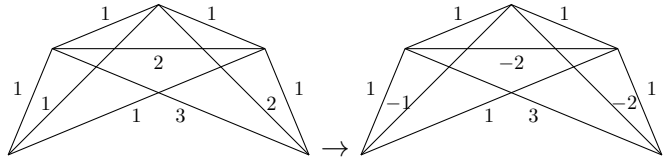

\begin{proof}
  Beginning with a non-zero integral frieze $F$, consider any quadrilateral in
  the triangulation of the $n$-gon. Since $n$ is even the parity of the lengths
  of the arcs of the quadrilateral fall into 4 general cases: all odd, all even,
  (odd,even,odd,even) and (odd,odd,even,even).

  In each case the sign difference between $F$ and $\sigma(F)$ cancels in the
  Ptolemy relation, either because the summands are of total degree 2 and each
  term in a monomial is negated, or because one term in each of the three
  summands is negated.
\end{proof}

\begin{lemma}\label{lem:anarc}
  Consider a non-zero integral $FA_{n-3}$ frieze. Let $\{b_i\}$ be its boundary
  state and $a_i$ the labels of the arcs of length $2$ such that arcs labeled
  $a_i$, $b_i$ and $b_{i+1}$ form a triangle. Then there exists an index $i$
  such that $|a_i|<|b_i|+|b_{i+1}|$.
\end{lemma}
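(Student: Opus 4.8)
The plan is to realize the frieze by a configuration of plane vectors and then win by an extremal ``longest vector'' argument. A non-zero integral $FA_{n-3}$ frieze assigns to every arc $(i,j)$ of the $n$-gon a non-zero integer $p_{ij}$, with $p_{i,i+1}=b_i$, satisfying the Ptolemy relation $p_{ik}p_{jl}=p_{ij}p_{kl}+p_{il}p_{jk}$ on every quadrilateral. These are exactly the three-term Plücker relations for $\mathrm{Gr}(2,n)$, so the collection $(p_{ij})$ is realized by a $2\times n$ matrix: there exist vectors $u_1,\dots,u_n\in\mathbb{R}^2$ with $p_{ij}=\det(u_i,u_j)$ for all $i<j$. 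In particular $b_i=\det(u_i,u_{i+1})$ and $a_i=\det(u_i,u_{i+2})$, and since every label is non-zero, no two of the $u_i$ are parallel. I would record this reduction first, since it converts the arithmetic statement into a geometric one.

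Next I would extract the local linear relation. For each $i$ the vectors $u_i,u_{i+2}$ are independent (as $a_i\neq 0$), so we may write $u_{i+1}=\alpha_i u_i+\beta_i u_{i+2}$ uniquely. Expanding the two relevant determinants gives $b_i=\det(u_i,u_{i+1})=\beta_i a_i$ and $b_{i+1}=\det(u_{i+1},u_{i+2})=\alpha_i a_i$, whence $|b_i|+|b_{i+1}|=(|\alpha_i|+|\beta_i|)\,|a_i|$. Therefore the desired inequality $|a_i|<|b_i|+|b_{i+1}|$ is equivalent to $|\alpha_i|+|\beta_i|>1$, and it suffices to exhibit one index $i$ where $u_{i+1}$ is \emph{not} a short combination of its neighbours. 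I would argue by contradiction, assuming $|\alpha_i|+|\beta_i|\le 1$ for every $i$.

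The extremal step is the heart of the argument. Choose $j$ so that the Euclidean norm $|u_j|$ is maximal among all the $u_i$. Applying the relation at index $j-1$ gives $u_j=\alpha_{j-1}u_{j-1}+\beta_{j-1}u_{j+1}$, so
\[
|u_j|\le |\alpha_{j-1}|\,|u_{j-1}|+|\beta_{j-1}|\,|u_{j+1}|\le (|\alpha_{j-1}|+|\beta_{j-1}|)\,|u_j|\le |u_j|,
\]
forcing equality everywhere. Equality in the triangle inequality forces $\alpha_{j-1}u_{j-1}$ and $\beta_{j-1}u_{j+1}$ to lie on a common ray. If both coefficients are non-zero this makes $u_{j-1}\parallel u_{j+1}$, contradicting $a_{j-1}=\det(u_{j-1},u_{j+1})\neq 0$; if $\alpha_{j-1}=0$ then $b_j=0$ and if $\beta_{j-1}=0$ then $b_{j-1}=0$, both impossible since all boundary labels are non-zero. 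This contradiction proves the lemma.

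The only delicate point is the very first reduction: asserting that a non-zero integral frieze is realized by genuine vectors with arc-labels equal to $2\times 2$ determinants. This is standard (it is the identification of friezes with points of $\mathrm{Gr}(2,n)$, and one may instead build the $u_i$ inductively along the triangulation using the Ptolemy relations), and I only need a realization over $\mathbb{R}$, not over $\mathbb{Z}$, so the norm argument is unaffected. Granting that, the extremal argument is entirely elementary, and I expect the write-up to be short.
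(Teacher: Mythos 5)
Your proof is correct, but it takes a genuinely different route from the paper. The paper's argument is purely discrete: it fixes a base vertex, considers the fan of arcs through it with labels $f_1,\dots,f_{n-1}$, applies the triangle inequality to the exchange relations $f_{i-1}b_{i+1}+f_{i+1}b_i=a_if_i$ under the negated hypothesis $|a_i|\geq|b_i|+|b_{i+1}|$, and concludes that the difference quotients $(|f_i|-|f_{i-1}|)/|b_i|$ form an increasing sequence that starts at $\geq 1$ and ends at $\leq -1$ --- a telescoping contradiction along a path, in the spirit of Coxeter--Conway. You instead realize the frieze geometrically as a point of the affine cone over $\mathrm{Gr}(2,n)$, writing every label as $\det(u_i,u_j)$, translate the hypothesis into $u_{i+1}$ being a sub-unit combination $\alpha_iu_i+\beta_iu_{i+2}$ with $|\alpha_i|+|\beta_i|\leq 1$, and run an extremal maximal-norm argument whose contradiction comes from the equality case of the triangle inequality (forced parallelism kills $a_{j-1}\neq 0$, a vanishing coefficient kills a boundary label). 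Your indexing checks out ($b_i=\beta_ia_i$, $b_{i+1}=\alpha_ia_i$), and the extremal step is airtight, including the degenerate coefficient cases. The one point you flag yourself --- the realization --- is indeed the only place needing care, and it is fine: every three-term Plücker relation on a quadruple $i<j<k<l$ is an exchange relation of $FA_{n-3}$ (mutate at the diagonal $ik$ in a triangulation containing the quadrilateral $ijkl$), so a non-zero frieze, viewed as the paper does as a ring homomorphism to $\mathbb{Z}$, satisfies all of them, and a non-zero real solution of the three-term relations is decomposable, hence realized by a rank-$2$ matrix over $\mathbb{Q}\subset\mathbb{R}$; in a write-up you should include this sentence rather than only citing standardness. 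Comparing the two: the paper's proof is elementary and self-contained, while yours buys symmetry (it is genuinely cyclic, with no choice of base vertex) and conceptual content (friezes as points of the Grassmannian with non-vanishing Plücker coordinates); your argument also transfers immediately to friezes over subrings of $\mathbb{C}$ with the usual absolute value, which is pertinent to the paper's closing remarks on friezes over rings of integers.
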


Note that this proof is a variation of that of Coxeter-Conway in the case of
positive friezes.

\begin{proof}
  Fix a vertex of the polygon and let $f_i$ denote the integers labeling the
  $n-1$ arcs at a fixed vertex ordered clockwise. Let $b_i$ be the boundary
  state, starting clockwise from the fixed vertex and $a_i$ as in the statement.

$$
\begin{tikzpicture}
\draw (180:3) -- node[midway, above left, scale=0.9] {$f_1=b_1$} (120:3) -- node[midway, above, scale=0.9] {$b_2$} (60:3) -- node[midway, above right, scale=0.9] {$b_3$} (0:3);
\draw (120:3) -- node[midway, above, scale=0.9] {$a_2$} (0:3);
\draw (180:3) -- node[midway, below left, scale=0.9] {$b_{n}=f_{n-1}$} (240:3) -- node[midway, below, scale=0.9] {$b_{n-1}$} (300:3);
\draw (180:3) -- node[midway, above left, scale=0.9] {$f_2=a_1$} (60:3);
\draw (180:3) -- node[midway, above, scale=0.9] {$f_3$} (0:3);
\draw (180:3) -- node[midway, above right, scale=0.9] {$f_{n-2}=a_{n-1}$} (300:3);
\fill ($ 0.4*(0:3)+0.6*(300:3) $) circle (.035);
\fill ($ 0.5*(0:3)+0.5*(300:3) $) circle (.035);
\fill ($ 0.6*(0:3)+0.4*(300:3) $) circle (.035);

\end{tikzpicture}
$$

  Suppose the lemma is not true, then $|a_i|\geq|b_i|+|b_{i+1}|$ for all $i$.
  For each $i$ the cluster mutation relations are
  $f_{i-1}b_{i+1}+f_{i+1}b_{i}=a_if_i$. Taking absolute value of both sides:
  $$|f_{i-1}b_{i+1}+f_{i+1}b_{i}|=|a_i||f_i|.$$ Applying triangle inequality to
  the left side and using $|a_i|\geq|b_i|+|b_{i+1}|$ on the right, we get
  $$|f_{i-1}||b_{i+1}|+|f_{i+1}||b_i|\geq|b_i||f_i|+|b_{i+1}||f_i|.$$

  Rearranging we have
  $$\frac{|f_{i+1}|-|f_i|}{|b_{i+1}|}\geq\frac{|f_i|-|f_{i-1}|}{|b_i|}.$$
  Thus the sequence $\frac{|f_i|-|f_{i-1}|}{|b_i|}$ from $i=2$ to $n-1$ is
  increasing.

  When $i=2$, we have
  $$\frac{|f_2|-|f_1|}{|b_2|}=\frac{|a_1|-|b_1|}{|b_2|}\geq\frac{|b_2|}{|b_2|}=1.$$

  On the other hand when $i=n-1$, we have
  $$\frac{|f_{n-1}|-|f_{n-2}|}{|b_{n-1}|}=\frac{|b_n|-|a_{n-1}|}{|b_{n-1}|}\leq\frac{-|b_{n-1}|}{|b_{n-1}|}=-1.$$

  But this is impossible, no increasing sequence can start at $1$ and end at
  $-1$. Thus we must have $|a_i|<|b_i|+|b_{i+1}|$ for some $i$.
\end{proof}

\begin{corollary}
  Every non-zero integral $FA_{n-3}$ frieze with boundary state composed of
  $\pm 1$ contains a triangulation whose arcs are all labeled $\pm 1$.
\end{corollary}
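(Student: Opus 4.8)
The plan is to induct on $n$ and peel off one \emph{ear} triangle at a time, using Lemma~\ref{lem:anarc} to guarantee that at each stage the ear can be chosen with a $\pm 1$ label. The base case $n=3$ is immediate: a triangle has no internal arcs, so its trivial triangulation consists solely of the three boundary arcs, all of which are labeled $\pm 1$ by hypothesis.

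For the inductive step, suppose $n>3$ and the claim holds for all smaller polygons. Since the boundary state consists of $\pm 1$, we have $|b_i|=1$ for every $i$, so Lemma~\ref{lem:anarc} produces an index $i$ with $|a_i|<|b_i|+|b_{i+1}|=2$. As $a_i$ is a non-zero integer, this forces $|a_i|=1$; that is, the length-$2$ arc forming a triangle with the boundary arcs $b_i$ and $b_{i+1}$ carries label $\pm 1$.

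Next I would cut off this ear: delete the vertex shared by $b_i$ and $b_{i+1}$, promoting the arc labeled $a_i$ to a boundary arc of the resulting $(n-1)$-gon. Restricting the frieze to this $(n-1)$-gon still satisfies every Ptolemy relation, since we have only discarded the single triangle together with the relations it participates in, so it is a non-zero integral $FA_{n-4}$ frieze. Its boundary state is obtained from the old one by replacing $b_i,b_{i+1}$ with the single arc $a_i$, and since $a_i=\pm 1$ this new boundary state again consists of $\pm 1$. By the inductive hypothesis the smaller frieze contains a triangulation $T'$ all of whose arcs are labeled $\pm 1$, and in particular $T'$ uses the boundary arc $a_i$.

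Finally I would reattach the ear. Adding back the deleted vertex together with the boundary arcs $b_i$ and $b_{i+1}$ turns $a_i$ into an internal diagonal and produces a triangulation $T=T'\cup\{b_i,b_{i+1}\}$ of the original $n$-gon. Every arc of $T$ is either an arc of $T'$, hence labeled $\pm 1$, or one of $b_i,b_{i+1}$, which is $\pm 1$ by hypothesis, so $T$ is the desired triangulation. The only points requiring care are the routine verifications that the restricted labeling remains a valid frieze and that the new boundary stays within $\{\pm 1\}$; the genuine content is entirely supplied by Lemma~\ref{lem:anarc}, which furnishes a $\pm 1$ ear at every stage, so I expect no real obstacle beyond this bookkeeping.
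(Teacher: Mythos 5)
Your proof is correct and matches the paper's intended argument: the corollary is left without an explicit proof precisely because it follows by applying Lemma~\ref{lem:anarc} (where $|b_i|=|b_{i+1}|=1$ forces $|a_i|=1$), cutting off the resulting ear, and inducting on the smaller polygon, exactly as you do. The paper's own later remark that ``iterating lemma~\ref{lem:anarc}'' produces triangulations with controlled labels confirms this is the intended route, and your bookkeeping (the restriction remaining a frieze, the new boundary staying in $\{\pm 1\}$) is sound.
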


We can now prove the main result of this section:

\begin{proof}
  By the above corollary, since a non-zero integral $A_{n-3}$ frieze is a
  non-zero integral $FA_{n-3}$ frieze there is a triangulation whose arcs are
  labeled $\pm 1$. Since no arc in the frieze is labeled $0$, by the Ptolemy
  relation the configurations in Figure~\ref{fig:obs} are impossible. This means
  that the labeling is admissible. Thus the number of friezes is at most the
  product of the number of admissible labelings and the number of triangulations
  $C(n-2)$.

  But, we already know of $C(n-2)$ non-zero integral friezes: the set of
  positive integral friezes. Thus, in the case that $n$ is odd, the number of non-zero
  integral friezes is $C(n-2)$. In the case of $n$ even, for each positive
  integral frieze $F$, $\sigma(F)$ is also non-zero integral frieze, and since
  $\sigma$ is an involution we have at least $2C(n-2)$ non-zero integral
  friezes.
\end{proof}

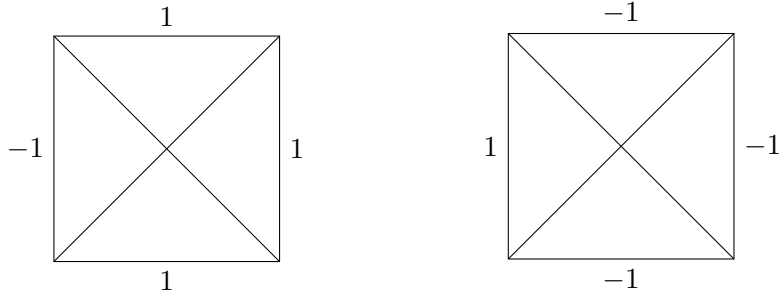
\begin{figure}
\begin{tikzpicture}
\draw (0,0) -- (3,0) -- (3,3) -- (0,3) -- cycle;
\draw (0,0) -- (3,3);
\draw (3,0) -- (0,3);
\node at (1.5,0) [below] {$1$};
\node at (0,1.5) [left] {$-1$};
\node at (1.5,3) [above] {$1$};
\node at (3,1.5) [right] {$1$};
\end{tikzpicture}\hspace{20 mm}
\begin{tikzpicture}
\draw (0,0) -- (3,0) -- (3,3) -- (0,3) -- cycle;
\draw (0,0) -- (3,3);
\draw (3,0) -- (0,3);
\node at (1.5,0) [below] {$-1$};
\node at (0,1.5) [left] {$1$};
\node at (1.5,3) [above] {$-1$};
\node at (3,1.5) [right] {$-1$};
\end{tikzpicture}
\caption{Obstructions for non-zero friezes\label{fig:obs}}
\end{figure}

\section{Non-zero integral $D_n$ friezes}

In \cite{FST}, it is shown that the cluster algebra of type $D_n$ can be thought
of in terms of (tagged) arcs and triangulations for a punctured $n$-gon. Thus, in
parallel with the $A_n$ case, a non-zero integral $D_n$ frieze is equivalent to
a labeling of the (possibly tagged) arcs of a punctured $n$-gon by non-zero
integers such the boundary is labeled $1$ and the relations in
figure~\ref{fig:dnrel} hold.

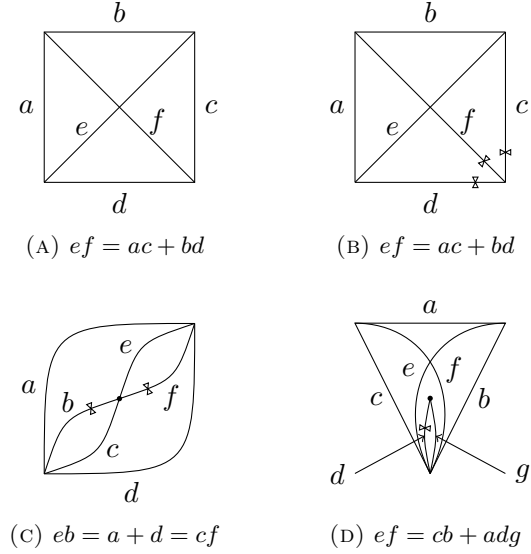
\begin{figure}
\subfloat[$ef=ac+bd$\label{fig:dnrel:a}]{
\begin{tikzpicture}
\path (-1,0) rectangle(3,0);
\draw (0,0) -- node[midway, left] {$a$} (0,2) -- node[midway, above] {$b$} (2,2) -- node[midway, right] {$c$} (2,0) -- node[midway, below] {$d$} (0,0);
\draw (0,0) -- node[near start, above] {$e$} (2,2);
\draw (2,0) -- node[near start, above] {$f$} (0,2);
\end{tikzpicture}}
\subfloat[$ef=ac+bd$\label{fig:dnrel:b}]{
\begin{tikzpicture}
\path (-1,0) rectangle(3,0);
\draw (0,0) -- node[midway, left] {$a$} (0,2) -- node[midway, above] {$b$} (2,2);
\draw[tage] (2,2) -- node[midway, right] {$c$} (2,0);
\draw[tags] (2,0) -- node[midway, below] {$d$} (0,0);
\draw (0,0) -- node[near start, above] {$e$} (2,2);
\draw[tags] (2,0) -- node[near start, above] {$f$} (0,2);
\end{tikzpicture}} \\
\subfloat[$eb=a+d=cf$\label{fig:dnrel:c}]{
\begin{tikzpicture}
\path (-1,0) rectangle(3,0);
\draw (0,0) .. controls (0,2) .. node[near start, left] {$a$} (2,2);
\draw (0,0) .. controls (2,0) .. node[near start, below] {$d$} (2,2);
\draw[tage] (0,0) .. controls (0.25,0.75) .. node[midway,above] {$b$} (1,1);
\draw (0,0) .. controls (0.75,0.25) .. node[midway,right] {$c$} (1,1);
\draw (1,1) .. controls (1.25,1.75) .. node[midway,left] {$e$} (2,2);
\draw[tags] (1,1) .. controls (1.75,1.25) .. node[midway,below] {$f$} (2,2);
\fill (1,1) circle(.035);
\end{tikzpicture}}
\subfloat[$ef=cb+adg$\label{fig:dnrel:d}]{
\begin{tikzpicture}
\path[name path=pline] (-2,0.5) rectangle(2,0.5);
\draw (0,0) node[below] {$\phantom{q}$} -- node[midway,left] {$c$}  (-1,2) -- node[midway,above] {$a$} (1,2) -- node[midway,right] {$b$} (0,0);
\draw (0,0) .. controls (-0.5,1) and (0,2) .. node[midway,left] {$e$} (1,2);
\draw (0,0) .. controls (0.5,1) and (0,2) .. node[midway,right] {$f$} (-1,2);
\draw[name path=pleft,tage] (0,0) .. controls (-0.1,0.5) .. (0,1);
\draw[name path=pright] (0,0) .. controls (0.1,0.5) .. (0,1);
\draw[name intersections={of=pleft and pline},->] (-1,0) node[left] {$d$} -- (intersection-1);
\draw[name intersections={of=pright and pline},->] (1,0) node[right] {$g$}-- (intersection-1);
\fill (0,1) circle(.035);
\end{tikzpicture}}
\caption{$D_n$ relations\label{fig:dnrel}}
\end{figure}

In \cite{FP} we determined that the number of positive friezes for $D_n$ was:

$$\sum_{m=1}^n d(m)\binomial{2n-m-1}{n-m}$$

Each positive integral $D_n$ frieze has a canonical triangulation, containing
all arcs between boundary vertices labeled $1$ and enough (untagged) spokes to
form a triangulation. The $m$ spokes all have the same label, an integer that
divides $m$.

\begin{theorem}\label{thm:dnmain}
  The number of non-zero integral friezes of type $D_n$ is
  $$4\sum_{m=1}^n d(m)\binomial{2n-m-1}{n-m}$$ when $n$ is even and
  $$2\sum_{m=1}^n d(m)\binomial{2n-m-1}{n-m}$$ when $n$ is odd.
\end{theorem}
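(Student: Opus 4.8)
The plan is to mirror the strategy of the $A_{n-3}$ case: exhibit explicit sign-change involutions on the set of non-zero friezes that carry positive friezes to genuinely new non-zero friezes (the lower bound), and then show that \emph{every} non-zero frieze arises in this way from a unique sign pattern (the upper bound). Concretely, I would introduce two commuting involutions. The first, $\sigma_1$, is the direct analog of Lemma~\ref{lem:neg}: negate every boundary-to-boundary arc of even length. The second, $\sigma_2$, negates every arc \emph{incident to the puncture} (the half-spokes together with the tagged/untagged radial arcs ending at the centre). A case check against the four relations of Figure~\ref{fig:dnrel} shows that both preserve friezes: in (c) and (d) the puncture-incident arcs always occur in pairs within each monomial, so $\sigma_2$ fixes every monomial, while $\sigma_1$ acts on the polygon arcs exactly as in Lemma~\ref{lem:neg}. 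Since $\sigma_2$ sends the common spoke label $s$ to $-s$ it is always non-trivial, whereas $\sigma_1$ is non-trivial precisely when $n$ is even. Acting on disjoint sets of arcs, they commute and generate a group $G$ with $|G|=2$ for $n$ odd and $|G|=4$ for $n$ even; since a positive frieze has no zero labels, $G$ acts freely, giving the lower bound $|G|\cdot\sum_{m=1}^n d(m)\binomial{2n-m-1}{n-m}$.

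For the upper bound I would prove that taking absolute values of all labels of a non-zero frieze $\tilde F$ yields a \emph{positive} frieze $F=|\tilde F|$, so that $\tilde F=\epsilon\cdot F$ for a sign pattern $\epsilon$. The polygon part is controlled by an analog of Lemma~\ref{lem:anarc}: cutting ears off the punctured $n$-gon and running the same increasing-quotient triangle-inequality argument on the boundary-to-boundary arcs produces a $\pm1$ partial triangulation on the polygon side, and ruling out the $D_n$ analogs of the obstruction in Figure~\ref{fig:obs} forces, in each polygon relation, the two summands on the right-hand side to share a sign. The puncture part is handled using relations (c) and (d) together with the divisibility statement from \cite{FP}: these show that $|\tilde F|$ realizes a valid choice of common spoke label $|s|$ dividing $m$. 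Hence $|\tilde F|$ satisfies all four relations with positive values and is one of the $\sum_{m=1}^n d(m)\binomial{2n-m-1}{n-m}$ positive friezes.

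It then remains to identify the admissible sign patterns. Writing each $\epsilon\in\{\pm1\}^{\text{arcs}}$ additively over $\mathbb{F}_2$, the requirement that $\epsilon\cdot F$ again satisfy every relation becomes the linear condition that the two summands agree in sign in each relation, together with $\epsilon_{\text{boundary}}=1$. On the polygon arcs this is precisely the admissibility condition of Section~\ref{sec:admis}, whose solution space with boundary fixed to $1$ has $\mathbb{F}_2$-dimension $0$ for $n$ odd and $1$ for $n$ even by the theorem of that section; on the puncture-incident arcs, relations (c) and (d) leave exactly one further free sign, the global spoke sign. Thus $\dim_{\mathbb{F}_2}G=1$ for $n$ odd and $2$ for $n$ even, i.e. $|G|=2$ or $4$, matching the two involutions above and completing the count for Theorem~\ref{thm:dnmain}.

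The step I expect to be the main obstacle is the upper bound, and within it the interplay between $\sigma_1$ and the puncture. Unlike the $A_{n-3}$ case, a positive $D_n$ frieze need \emph{not} contain an all-$\pm1$ triangulation — the common spoke label may exceed $1$ — so one cannot simply propagate admissibility from a single $\pm1$ triangulation; the polygon arcs and the puncture-incident arcs must be analyzed separately and then reconciled through relations (c) and (d), which mix the two types and involve the tagged/untagged dichotomy. In particular one must assign a consistent notion of parity to the radial and puncture-enclosing arcs so that the two arcs appearing additively in relation~(c) have matching parity; otherwise $\sigma_1$ would fail to preserve (c). Carrying out this parity bookkeeping, and checking that the tagged/untagged dichotomy introduces no extra admissible sign patterns, is the technical heart of the argument.
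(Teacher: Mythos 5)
Your overall architecture (two commuting sign involutions for the lower bound, reduction of an arbitrary non-zero frieze to a positive one via an admissible-sign-pattern analysis for the upper bound) matches the paper's, but there is a genuine gap at the heart of the lower bound: your $\sigma_1$, ``negate every boundary-to-boundary arc of even length and fix all puncture-incident arcs,'' is not an involution of $D_n$ friezes. Test it on relation~(b) of Figure~\ref{fig:dnrel}: there $a$, $b$, $e$ are boundary-to-boundary arcs with lengths $p$, $q$, $p+q$ and $c$, $d$, $f$ are puncture-incident. If you fix the spoke signs, preserving $ef=ac+bd$ forces $p+q\equiv p\equiv q\pmod 2$, which holds only when $p$ and $q$ are both even; whenever the two sides of such a quadrilateral have lengths of opposite parity the relation breaks. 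The same failure occurs in relations (c) and (d). You correctly identified this as the obstacle (``otherwise $\sigma_1$ would fail to preserve (c)'') but left it unresolved, and the resolution is not bookkeeping: the even-length boundary negation only becomes a frieze map when coupled with negating every \emph{second} untagged spoke and the \emph{opposite} alternating set of tagged spokes (assigning spoke $i$ the parity of $1$ plus its distance along the boundary from a fixed spoke). This coupled map is the paper's $\sigma_2$ (Lemma~\ref{lem:dninv:2}), is well defined only for $n$ even precisely because the alternation must close up cyclically, and its verification against all four relations is the real content you deferred.

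This error propagates into your $\mathbb{F}_2$-analysis of sign patterns. You decompose the admissible-sign group as (polygon-only patterns with boundary fixed) $\oplus$ (one global spoke sign), but neither summand is correct: ``negate even-length boundary arcs, spokes fixed'' is not admissible (as above), and the correct group for $n$ even is generated by ``negate all spokes'' together with the coupled map just described --- no nonidentity element fixes all spokes while negating boundary arcs. Your dimension count ($1$ for $n$ odd, $2$ for $n$ even) happens to agree with the paper's classification of sign configurations into four types, obtained by cutting along an untagged spoke to produce an admissible labeling of an $(n+2)$-gon and invoking the theorem of Section~\ref{sec:admis}, but it agrees for the wrong generators. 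Your upper-bound route via $|\tilde F|$ is also contingent on this: the claim that the two summands in every relation share a sign is exactly what the paper's Lemma~\ref{lem:dntriang} (the analog of Lemma~\ref{lem:anarc} you correctly sketch, yielding boundary arcs labeled $\pm 1$ and spokes labeled $\pm d$) plus the admissibility lemma establish, and it cannot be asserted relation-by-relation in isolation --- e.g.\ $ef=ac+bd$ with all six labels non-zero does not by itself preclude $ac$ and $bd$ having opposite signs; ruling that out requires the global classification. So the skeleton is right, but the proposal is missing the one construction (the alternating-spoke involution) on which both the lower bound and the sign-pattern count depend.
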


Similarly with the non-zero integral $A_{n-3}$ friezes, we will need to consider
non-zero integral friezes over an enlarged cluster algebra $FD_n$ where one adds
in a frozen variable for each boundary arc of the punctured $n$-gon. In this
case we have the following involutions:

\begin{lemma}\label{lem:dninv:1}
  Given a non-zero integral $FD_n$ frieze, negating all spoke arcs (tagged and
  non-tagged) gives a non-zero integral frieze. Call this $\sigma_1$.
\end{lemma}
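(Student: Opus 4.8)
The plan is to verify directly that $\sigma_1$ preserves each of the four mutation relations of Figure~\ref{fig:dnrel}, so that the image of a non-zero integral $FD_n$ frieze under $\sigma_1$ is again such a frieze. First I would observe that negation sends non-zero integers to non-zero integers, so $\sigma_1(F)$ still assigns a non-zero integer to every arc; since the frozen boundary arcs are never spokes, the boundary state is left unchanged. Thus the only thing that needs checking is that every relation in Figure~\ref{fig:dnrel} continues to hold after negating the spoke labels, where by a \emph{spoke} I mean an arc incident to the puncture (in either tagging, plain or notched).

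The key step is a parity count of how many spokes occur in each monomial of each relation, carried out by reading off from each diagram which labeled arcs actually meet the puncture. In relation~\ref{fig:dnrel:a} no corner is the puncture, so none of $a,b,c,d,e,f$ is a spoke and $ef=ac+bd$ is untouched. In relation~\ref{fig:dnrel:b} exactly one corner is the puncture, so exactly one of the two arcs in each of $ef$, $ac$ and $bd$ is a spoke; hence $\sigma_1$ flips the sign of all three monomials at once, and the homogeneous relation $ef=ac+bd$ survives. In relation~\ref{fig:dnrel:c} the four arcs $b,c,e,f$ meeting the puncture are spokes while $a,d$ are boundary-to-boundary arcs; each of the products $eb$ and $cf$ contains two spokes and is therefore unchanged, and $a+d$ contains no spoke, so $eb=a+d=cf$ is preserved. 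Finally in relation~\ref{fig:dnrel:d} only $d$ and $g$ are spokes, so $ef$ and $cb$ are unchanged while $adg$ contains the two spokes $d,g$ and is likewise unchanged; hence $ef=cb+adg$ still holds after applying $\sigma_1$.

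Having checked all four relation types, I would conclude that $\sigma_1(F)$ satisfies every mutation relation and is therefore a non-zero integral $FD_n$ frieze. Since negating twice returns each spoke to its original value and fixes all other arcs, $\sigma_1$ is in fact an involution on the set of such friezes.

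I expect the only real obstacle to be bookkeeping rather than mathematics: one must correctly identify, in each of the four diagrams, precisely which labeled arcs are incident to the puncture, because it is exactly the parity of the number of spokes in each monomial that forces the sign changes to cancel. Once these spoke-counts are tabulated (one spoke per monomial in the homogeneous relation~\ref{fig:dnrel:b}, an even number in every monomial of~\ref{fig:dnrel:c} and~\ref{fig:dnrel:d}, and none in~\ref{fig:dnrel:a}), consistency is immediate and no inequality or induction is required.
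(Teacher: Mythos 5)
Your overall strategy is exactly the paper's: verify the four exchange relations of Figure~\ref{fig:dnrel} one by one, using the parity of the number of spokes in each monomial --- an even number of spokes leaves a monomial fixed, while exactly one spoke per monomial negates the whole (homogeneous) relation. Your analyses of relations~\ref{fig:dnrel:b}, \ref{fig:dnrel:c} and \ref{fig:dnrel:d} are correct and match the paper's reasoning, as does your preliminary observation that negation preserves non-vanishing and that the frozen boundary arcs are never spokes.

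There is, however, one concrete slip, and it is precisely at the bookkeeping point you flagged as the main risk: your claim that in relation~\ref{fig:dnrel:a} ``no corner is the puncture.'' A plain quadrilateral can in fact have the puncture as one of its vertices. For instance, in a tagged triangulation in which three or more plain spokes meet the puncture, flipping a spoke is governed by the relation of Figure~\ref{fig:dnrel:a} with the puncture at a corner of the quadrilateral; Figure~\ref{fig:dnrel:b} only depicts the configuration in which the arcs at the puncture carry the notched tagging, so it does not subsume this case. In the configuration you missed, three of the six arcs are spokes --- the two sides meeting the puncture and the diagonal incident to it --- so each of the monomials $ef$, $ac$ and $bd$ contains exactly one spoke, and the relation is preserved by wholesale negation, exactly as in your treatment of relation~\ref{fig:dnrel:b}, not because it is ``untouched.'' The paper's proof is organized to avoid this trap: it first splits on whether the puncture is a vertex of the relation at all (if not, nothing changes, whichever relation it is), and only then observes that when the puncture is a vertex, every term contains either exactly one spoke or an even number of spokes. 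Your argument repairs immediately --- fold the puncture-at-a-corner instance of relation~\ref{fig:dnrel:a} into your one-spoke-per-monomial count --- but as written the case analysis is incomplete and the assertion about relation~\ref{fig:dnrel:a} is false as stated.
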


\begin{proof}
  Given $F$, a non-zero integral $FD_n$ frieze, consider a tagged triangulation
  and an arc in that triangulation. The mutation relation for this arc in the
  triangulation must be one of the relations in figure~\ref{fig:dnrel}. If the
  puncture is not one of the vertices, then $F$ and $\sigma_1(F)$ agree on all
  arcs involved in the relation and the relation is true for $\sigma_1(F)$.

  On the other hand, if one of the vertices involved is the puncture, when one
  examines the relations in Figure~\ref{fig:dnrel}, we see that either all terms
  contain one spoke and one non-spoke variable, in which case the relation for
  $\sigma_1(F)$ is true since it is the negation of that for $F$. Otherwise the
  spoke variables appear an even number of times in each multiplicand and again
  the $\sigma_1(F)$ relation follows from the relation in $F$.
\end{proof}

\begin{lemma}\label{lem:dninv:2}
  When $n$ is even, given a non-zero integral $FD_n$ frieze, fix an untagged
  spoke and negate every second untagged spoke. Negate the opposite set of
  tagged spokes and all arcs of even length between boundary vertices. This
  gives a non-zero integral $FD_n$ frieze. Let $\sigma_2$ denote this
  involution.
\end{lemma}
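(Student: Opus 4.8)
The plan is to verify that the described transformation $\sigma_2$ preserves each of the four mutation relations in Figure~\ref{fig:dnrel}, just as in the proof of Lemma~\ref{lem:dninv:1}, by checking that the sign changes cancel in every relation. First I would set up the bookkeeping: fix an orientation of the untagged spokes and partition them into ``even'' and ``odd'' positions relative to the chosen reference spoke; since $n$ is even, this two-coloring is consistent around the puncture. The transformation negates the untagged spokes in one color class, the tagged spokes in the opposite color class, and additionally negates every boundary-connecting arc of even length. I would record precisely which arcs change sign, noting that a spoke's color is determined by which boundary vertex it meets, so that the parity structure interacts cleanly with the even/odd length structure used in Lemma~\ref{lem:admisinv} and Lemma~\ref{lem:neg}.

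Next I would go relation by relation through Figure~\ref{fig:dnrel}. For relation~\ref{fig:dnrel:a} (and likewise~\ref{fig:dnrel:b}), where no vertex is the puncture, the arcs $a,b,c,d,e,f$ are all boundary-connecting arcs, so I would reuse the parity argument from Lemma~\ref{lem:neg}: in the Ptolemy relation $ef = ac + bd$ the four arcs of a quadrilateral have lengths summing to an even number, forcing an even number of even-length arcs, and the sign changes cancel in each of the three monomials $ef$, $ac$, $bd$. For relations~\ref{fig:dnrel:c} and~\ref{fig:dnrel:d}, where the puncture is a vertex, I would track both the spoke sign changes and the even-length boundary-arc sign changes simultaneously. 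The key combinatorial fact to extract is that an untagged spoke and its companion tagged spoke meeting the same boundary vertex lie in opposite color classes, so exactly one of the two is negated; this should make the spoke contributions behave like a single consistent sign flip across the relation, matching how Lemma~\ref{lem:dninv:1} handled the spoke parities.

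I expect the main obstacle to be relation~\ref{fig:dnrel:d}, namely $ef = cb + adg$, since it mixes spokes of both taggings, boundary-connecting arcs, and the asymmetry between the two-term left side and the three-term right side. Here I would carefully determine, for each of the six labels, whether it is a tagged spoke, an untagged spoke, or a boundary arc, and then compute the net sign of each monomial under $\sigma_2$. The delicate point is confirming that the lengths and colors conspire so that $ef$, $cb$, and $adg$ all acquire the same overall sign; I anticipate this reduces to the single parity identity that the total number of sign-flipped factors in each monomial has the same parity, which follows from $n$ being even together with the alternating coloring. Finally, I would note that $\sigma_2$ is manifestly an involution, since negating a fixed subset of arcs twice restores every label, completing the proof.
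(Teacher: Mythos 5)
Your overall strategy --- verifying the four exchange relations of Figure~\ref{fig:dnrel} one at a time and reducing everything to parity bookkeeping --- matches the paper's, and your key observation for relations \ref{fig:dnrel:c} and \ref{fig:dnrel:d}, that the tagged and untagged spokes at a common boundary vertex lie in opposite negation classes (so exactly one of $d$, $g$ flips in $ef=cb+adg$), is exactly what the paper uses there. But there is a genuine gap in your treatment of relations \ref{fig:dnrel:a} and \ref{fig:dnrel:b}: you assert that in these relations no vertex is the puncture, so that all six arcs are boundary-connecting and the parity argument of Lemma~\ref{lem:neg} applies verbatim. That classification is wrong. Relation \ref{fig:dnrel:b} \emph{always} has the puncture as a vertex --- the tagged arcs $c$, $d$, $f$ are by definition incident to the puncture --- and relation \ref{fig:dnrel:a} also occurs with the puncture as a corner of the quadrilateral (flip the middle spoke in a fan of three or more spokes: two sides and one diagonal of the quadrilateral are then spokes). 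In these configurations three of the six arcs are spokes, which have no ``length'' in the sense of Lemma~\ref{lem:neg}, so the argument you propose to reuse does not apply as stated; and these are precisely the cases where $\sigma_2$'s mixing of spoke negation with even-length negation has to be reconciled.

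The paper closes this case with a device your plan lacks. Using the freedom to replace $\sigma_2$ by $\sigma_1\circ\sigma_2$ (the same involution with the other choice of fixed spoke), one may assume a chosen spoke appearing in the relation is not negated, and then one \emph{declares} the length of any spoke to be $1$ plus its boundary distance from that reference. With this convention $\sigma_2$ negates exactly the arcs of even (extended) length, and the boundary of the quadrilateral still has even total length, so the quadrilateral cancellation of Lemma~\ref{lem:neg} goes through unchanged. Your two-coloring of spokes by boundary vertex contains the raw data needed to set this up, so the gap is reparable, but as written your case analysis mis-sorts relations \ref{fig:dnrel:a} and \ref{fig:dnrel:b} and would leave the spoke-bearing instances of the Ptolemy relation unverified. (Your remaining points --- consistency of the alternating coloring because $n$ is even, the parity checks for \ref{fig:dnrel:c} and \ref{fig:dnrel:d}, and the observation that $\sigma_2$ is an involution --- are sound and agree with the paper.)
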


Note that that there are really two involutions here, depending on which spoke
one fixes, but they are related by $\sigma_1$. That is $\sigma_2\circ\sigma_1$
is the other involution.

\begin{proof}
  Let $F$ be a $FD_n$ frieze. Let $F(a)$ denote the value of $F$ on edge $a$.

  For relation A and B: With out loss of generality, we may assume
	$F(d)=(\sigma_2(F))(d)$ otherwise we consider $\sigma_1(\sigma_2(F))$. This
	case then follows from the $A_{n-3}$ case: if none of the arcs are spokes it is
	clear. Otherwise, one considers the length a spoke to be $1$ plus the distance
	from $d$, then $F$ and $\sigma_2(F)$ differ in sign	on exactly the edges of
	even length. The boundary of the relation also has an	even total length.

  For relation C: By symmetry we can just verify the first relation. Since $n$
  is even, it follows that the length of $a$ and $d$ have the same parity. If
  that parity is even, $\sigma_2(F)(a)=-F(a)$ and $\sigma_2(F)(d)=-F(d)$,
	so $\sigma_2(F)(a)+\sigma_2(F)(d)=-(F(a)+F(d))$. Considering $e$ and $b$, we
	have $\sigma_2(F)(e)\sigma_2(F)(b)=-F(e)F(b)$ since one is of even length
	and the other odd. If the parity is odd, then
	$\sigma_2(F)(a)+\sigma_2(F)(d)=F(a)+F(d)$ and 
	$\sigma_2(F)(e)\sigma_2(F)(b)=F(e)F(b)$ since the lengths of $e$ and $b$
	are both even or both odd. In either case we have 
	$\sigma_2(F)(a)+\sigma_2(F)(d)=\sigma_2(F)(e)\sigma_2(F)(b)$ when
	$F(a)+F(b)=F(e)F(b)$.

  For relation D: First note that $\sigma_2(F)(d)\sigma_2(F)(g)=-F(d)F(g)$.
	Suppose that $c$ and $b$ are both even or both odd. Then
	$\sigma_2(F)(e)\sigma_2(F)(f)=F(e)F(f)$ and
	$\sigma_2(F)(c)\sigma_2(F)(b)=F(c)F(b)$. Then $a$ is of even length, so
	$\sigma_2(F)(a)=-F(a)$. Thus 
	$\sigma_2(F)(a)\sigma_2(F)(d)\sigma_2(F)(g)=F(a)F(d)F(g)$.
	On the other hand, the lengths of $c$ and $b$ have opposite parity, we have
	$\sigma_2(F)(e)\sigma_2(F)(f)=-F(e)F(f)$ and
	$\sigma_2(F)(c)\sigma_2(F)(b)=-F(c)F(b)$. Then $a$ is of odd length, so
	$\sigma_2(F)(a)=F(a)$. Thus in either case when the relation is satisfied under
	$F$ it is satisfied under $\sigma_2(F)$.
\end{proof}

Using these two involutions, we can generate $4$ non-zero integral friezes from
each positive frieze when $n$ is even and $2$ when $n$ is odd. All that remains
is to prove that we have no more. We will follow the method of proof from
\cite{FP}, but adjusted to allow arcs labeled by $-1$. From this point onwards

\begin{lemma}\label{lem:dntriang}
  Given a non-zero integral $FD_n$ frieze for $n\geq 2$, with a boundary state
  consisting of $\pm 1$, there exists a triangulation consisting of arcs between
  boundary vertices labeled $\pm 1$ and $m$ untagged spokes labeled by $\pm d$
  for some non-zero integer $d$.
\end{lemma}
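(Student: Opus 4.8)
The plan is to mirror the argument of \cite{FP} for positive $D_n$ friezes, inserting absolute values and using the involutions $\sigma_1$ and $\sigma_2$ of Lemmas~\ref{lem:dninv:1} and~\ref{lem:dninv:2} to normalize signs. As in the type $A_{n-3}$ case, I would induct on $n$, peeling off length-$2$ boundary ears labelled $\pm 1$ until the only remaining arcs are spokes forming a fan around the puncture, and then show that those spokes all have a common absolute value. The base cases of small $n$ are checked directly against the relations of figure~\ref{fig:dnrel}, where the triangulation is essentially forced.

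For the peeling step I would adapt Lemma~\ref{lem:anarc}. Every arc of the triangulation that does not meet the puncture satisfies an ordinary Ptolemy relation (relation A of figure~\ref{fig:dnrel}), identical in form to the type $A_{n-3}$ mutations, so fixing a boundary vertex and fanning out along the part of the polygon away from the puncture yields exactly the telescoping sequence of Lemma~\ref{lem:anarc}. This produces a length-$2$ boundary arc $a_i$ with $|a_i| < |b_i| + |b_{i+1}| = 2$, i.e. $|a_i| = 1$. Cutting off the resulting triangle gives a non-zero integral $FD_{n-1}$ frieze whose boundary state is still $\pm 1$; by induction it admits a triangulation with boundary arcs $\pm 1$ and spokes $\pm d$, and re-attaching the ear (all three of whose arcs are $\pm 1$) extends it. Iterating, the induction reduces to the situation where no boundary ear remains and the region around the puncture is triangulated by a fan of $m$ untagged spokes $s_1,\dots,s_m$.

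It then remains to show that these spokes can be taken to be $\pm d$ for a single non-zero integer $d$. Flipping a spoke $s_j$ in the fan is again a Ptolemy relation A, whose quadrilateral has two $\pm 1$ boundary sides and two neighbouring spokes, so that $s_j t = \pm s_{j-1} \pm s_{j+1}$ where $t$ is the flipped boundary-to-boundary arc. Because the spoke fan closes up cyclically around the puncture, the analogue of the telescoping estimate of Lemma~\ref{lem:anarc} becomes a cyclic inequality, and a cyclic monotone sequence must be constant; this forces all $|s_j|$ to equal a common value $|d|$, with $d$ a non-zero integer since the frieze is integral and no label is $0$. The passage around the puncture is where the type $D$ phenomenon enters: the self-folded and puncture relations C and D of figure~\ref{fig:dnrel} govern how the signs and the tagged and untagged spokes match up as one returns to the starting spoke, and $\sigma_1,\sigma_2$ are used to put the fan into the normalized form with all spokes $\pm d$.

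The main obstacle is this final spoke analysis. Away from the puncture everything reduces cleanly to the type $A_{n-3}$ computation already carried out, but at the puncture the relevant mutations are the non-Ptolemy relations C and D, and one must check that the cyclic telescoping genuinely closes up, so that the spokes are forced equal in absolute value, and that the tagged/untagged bookkeeping is consistent with the sign normalization provided by $\sigma_1$ and $\sigma_2$. This is precisely the point at which the positive-frieze computation of \cite{FP} must be redone with signs, and it is where the real work lies.
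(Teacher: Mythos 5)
Your overall architecture (peel off ears labelled $\pm 1$, then a cyclic convexity argument on the spoke fan) is close to the paper's, but the engine you propose for the peeling step is wrong, and the two halves of your argument do not mesh. The telescoping of Lemma~\ref{lem:anarc} does not adapt to the punctured polygon: that argument is anchored by the fact that the first and last arcs of the fan at a fixed vertex are boundary arcs ($f_1=b_1$, $f_{n-1}=b_n$), whereas in the punctured $n$-gon the fan at a boundary vertex wraps around the puncture and terminates in spokes and loops rather than boundary arcs, so no such anchoring exists. Indeed your claim that one always finds a length-$2$ arc with $|a_i|=1$ is simply false: for the positive frieze in which all $n$ untagged spokes are labelled $1$ (boundary all $1$), relation A gives $a_if_i=b_{i+1}f_{i-1}+b_if_{i+1}=2$, so every length-$2$ boundary-to-boundary arc is labelled $2$ and there is no ear to peel. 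The paper needs no telescoping here at all; it runs a dichotomy: either some internal length-$2$ arc is labelled $\pm 1$, in which case one cuts off that triangle and inducts, or \emph{every} length-$2$ arc has norm at least $2$.

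The second branch of that dichotomy is exactly the hypothesis your final paragraph is missing. Your cyclic relation $s_jt=\pm s_{j-1}\pm s_{j+1}$ yields, after the triangle inequality, only $|s_j||t|\le|s_{j-1}|+|s_{j+1}|$; with nothing better than $|t|\ge 1$ this does not force constancy (norms alternating $1,2,1,2,\dots$ satisfy $|s_j|\le|s_{j-1}|+|s_{j+1}|$). With the dichotomy's input $|t|\ge 2$ one gets $2|s_j|\le|s_{j-1}|+|s_{j+1}|$, so the cyclic sequence $|s_j|-|s_{j-1}|$ is monotone, hence constant, hence $|s_j|$ is constant --- this is the paper's argument, and note that it uses only relation A quadrilaterals through the puncture, so your concern about relations C and D governing the ``closing up'' is a red herring for $n>2$; likewise $\sigma_1$ and $\sigma_2$ play no role in this lemma (and $\sigma_2$ is not even defined for odd $n$). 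Where relation C genuinely matters is the base case $n=2$, which you dismiss as ``essentially forced'': there the paper must rule out $a=-d$ (else $e$ or $b$ vanishes), deduce $|eb|=|cf|=2$, and split on whether $|e|=|c|$, in the unbalanced case reinterpreting the tagged arc as a loop to produce the required triangulation.
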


\begin{proof}
  When $n=2$ we are in the situation of figure~\ref{fig:dnrel:c}. We cannot have
  $a=-d$, otherwise one of $e$ or $b$ will be $0$. Thus $a=d$, so $|a+d|=2$ and
  it follows that $|eb|=2=|cf|$. If $|e|=|c|$, then since $e|2$ we take the arcs
  labeled $e$ and $c$ for the triangulation. Otherwise $|e|\neq |c|$, in which
  case with out loss of generality, $|e|=2$ so $|b|=1$, and we take the arcs
  labeled $c$ and $b$, and consider $b$ to be a loop rather than a tagged arc.

  When $n>2$, we fall into two cases, either there exists an internal arc of
  length $2$ labeled $\pm 1$ or there are none. If there is one, cut the
  triangle subtended by the arc off, then we have a $FD_{n-1}$ frieze and by
  induction, it follows that the required triangulation exists.

  Otherwise all arcs of length $2$ have labels of norm larger than $1$. Consider
  the triangulation by untagged spokes. Suppose the $i$-th spoke (modulo $n$)
  has label $f_i$ and the arc of length $2$ crossing the $i$-th spoke has label
  $a_i$. The boundary state is given by $b_i$, where $b_i$ labels the boundary
  arc joining vertex $i-1$ and $i$.

  Then we have: $$a_if_i=b_{i+1}f_{i-1}+b_if_{i+1}.$$ Taking norms and using
  triangle inequality, (note that $|b_i|=1$) we have
  $$|a_i||f_i|\leq |f_{i-1}|+|f_{i+1}|.$$ And since $|a_i|\geq 2$, we have
  $$2|f_i|\leq |f_{i-1}|+|f_{i+1}|,$$ or $$|f_i|-|f_{i-1}|\leq|f_{i+1}|-|f_i|.$$
  The sequence $|f_i|-|f_{i-1}|$ is cyclic and monotone, so it is constant. Then
  the sequence $|f_i|$ is monotone and also cyclic, so it is also constant.
\end{proof}

Given a non-zero integral $FD_n$ frieze and a triangulation of the punctured
$n$-gon, we associate a \textbf{sign configuration}. It is a labeling of the
triangulation by $\pm 1$ where an edge is labeled $1$ if the corresponding
label in the frieze is positive and $-1$ when the label is negative.

\begin{lemma}
  Given a non-zero integral $FD_n$ frieze with boundary $\pm 1$ and the
  triangulation from lemma~\ref{lem:dntriang}, the associated sign configuration
  is admissible in the sense of section~\ref{sec:admis} when we cut along one of
  the untagged spokes of the triangulation to obtain a triangulation of a
  $n+2$-gon.
\end{lemma}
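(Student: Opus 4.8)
The plan is to reduce the claim, 4-cycle by 4-cycle, to exactly the non-vanishing obstruction already used in the $A_{n-3}$ case (Figure~\ref{fig:obs}). Recall that an ordinary Ptolemy quadrilateral with cyclic sides $a,b,c,d$ and diagonals $e,f$ carries the exchange relation $ef=ac+bd$. Since the frieze $F$ takes only non-zero values we have $ef\neq 0$, hence $ac+bd\neq 0$. The key observation is that \emph{whenever the two summands $ac$ and $bd$ have equal absolute value}, the condition $ac+bd\neq 0$ forces $\mathrm{sgn}(ac)=\mathrm{sgn}(bd)$, i.e. $\mathrm{sgn}(a)\mathrm{sgn}(b)\mathrm{sgn}(c)\mathrm{sgn}(d)=1$. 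This is precisely admissibility of that $4$-cycle in the sense of Section~\ref{sec:admis}, and is the content of the two forbidden sign patterns in Figure~\ref{fig:obs}. Thus the whole lemma comes down to verifying, for every $4$-cycle of the cut triangulation, that the two products of opposite sides have equal magnitude.

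Here the special structure of the triangulation from Lemma~\ref{lem:dntriang} does the work. After cutting along one untagged spoke, the puncture becomes an ordinary boundary vertex $P$ of the $(n+2)$-gon, the cut spoke becomes the two boundary edges incident to $P$, and we obtain an honest polygon triangulation $T'$ whose arcs carry only two magnitudes: boundary and boundary-to-boundary arcs have $|\cdot|=1$, while every spoke has $|\cdot|=|d|$. Crucially, every arc of magnitude $|d|$ has $P$ as an endpoint. Hence in any quadrilateral of $T'$, if some side has magnitude $|d|$ then $P$ is a corner of that quadrilateral, and then the \emph{two} sides meeting at $P$ both have magnitude $|d|$ while the other two sides have magnitude $1$. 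So a $4$-cycle of $T'$ has either no spoke side (all four sides of magnitude $1$) or exactly two spoke sides, adjacent at $P$. In the first case both opposite products have magnitude $1$; in the second, each opposite pair consists of one spoke and one unit arc, so both products have magnitude $|d|$. Either way the summands are of equal magnitude and the reduction of the first paragraph applies. This settles every quadrilateral governed by the ordinary relation Figure~\ref{fig:dnrel:a}, which is all of them whenever $T'$ contains at least two spokes and no self-folded triangle.

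The main obstacle is the self-folded configuration, where the triangulation of Lemma~\ref{lem:dntriang} contains a loop (a single radius together with its enclosing loop, the $m=1$ situation) so that the governing relation is Figure~\ref{fig:dnrel:c} or Figure~\ref{fig:dnrel:d} rather than ordinary Ptolemy. Here the equal-magnitude bookkeeping must be redone by hand. For relation Figure~\ref{fig:dnrel:c} the non-vanishing of $eb=a+d$ already forces $a=d$ exactly as in the base case of Lemma~\ref{lem:dntriang} (since $a=-d$ would make $e$ or $b$ vanish), which pins down the relevant signs directly; for relation Figure~\ref{fig:dnrel:d} one compares the magnitudes of the summands $cb$ and $adg$ using that all spokes share the magnitude $|d|$. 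These cases can be streamlined by first applying the involution $\sigma_1$ of Lemma~\ref{lem:dninv:1} to normalize the signs of the spokes, reducing to a single representative per configuration. In every case the product of the signs around the $4$-cycle is forced to be $+1$, so the sign configuration of the cut triangulation is admissible.
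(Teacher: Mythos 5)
Your proof follows the paper's argument essentially verbatim: both reduce admissibility of each $4$-cycle to the non-vanishing of an exchange relation whose two summands have equal magnitude, splitting into the all-boundary quadrilaterals, the quadrilaterals cornered at the puncture (two spokes of common magnitude $|d|$ meeting at $P$), and the self-folded configuration resolved via the relation of Figure~\ref{fig:dnrel:c}, which forces the two enclosing arcs to share a sign. The only deviation is your aside on relation~\ref{fig:dnrel:d}: since the triangulation of Lemma~\ref{lem:dntriang} contains no tagged radius, no $4$-cycle of the cut triangulation is governed by that relation, so this (under-justified) case is superfluous and can simply be deleted.
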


\begin{proof}
  Note that any quadrilateral in the $n+2$-gon either comes from a honest 
	quadrilateral in the punctured $n$-gon or in the case where the is only one
	untagged spoke, from the following configuration (self folded triangle) where
	the cut is along $c$:

$$\begin{tikzpicture}
\path (-1,0) rectangle(3,0);
\draw (0,0) .. controls (0,2) .. node[near start, left] {$a$} (2,2);
\draw (0,0) .. controls (2,0) .. node[near start, below] {$f$} (2,2);
\draw[tage] (0,0) .. controls (0.25,0.75) .. node[midway,above] {$b$} (1,1);
\draw (0,0) .. controls (0.75,0.25) .. node[midway,right] {$c$} (1,1);
\draw[dashed] (1,1) .. controls (1.25,1.75) .. node[midway,left] {$e$} (2,2);
\fill (1,1) circle(.035);
\end{tikzpicture}$$

  Quadrilaterals in the punctured $n$-gon come in two classes as well: those
  whose vertices are all boundary vertices and those which have the puncture
  as one boundary vertex. In the first case, all arcs of the quadrilateral are
  already labeled $\pm 1$ and the two diagonals are non zero. It then follows
	from the relation in figure~\ref{fig:dnrel:a} that there are an even number
	of $-1$ labels on the quadrilateral and thus this quadrilateral is admissible.

  In the second case, the quadrilateral has two adjacent arcs labeled $\pm 1$
  and two adjacent arcs labeled $\pm d$ (the spokes). Since the product of the
  diagonals of the quadrilateral is nonzero, the sum of the product of the
	opposite labels of the quadrilateral are either $2d$ or $-2d$, so we cannot
	have an odd number of the arcs of the quadrilateral be negative and thus the
	quadrilateral is admissible

  In the self-folded case, we see that $a$ and $f$ are labeled $\pm 1$. If they
  have opposite sign, then $a+f=0=be$ so one of $b$ or $e$ would be zero. This
  is not the case, so $a$ and $f$ have the same sign. Cutting along $c$ revealed
  a quadrilateral with sides $a$, $f$, $c$ and $c$. Since the sign of $a$ and
  $f$ agree, the quadrilateral is admissible.
\end{proof}

In the case of a non-zero integral $D_n$ frieze, up to $\sigma_1$, we may assume
that the sign configuration has at least one spoke labeled $1$. Cutting along
this spoke gives triangulation of the $n+2$-gon and an admissible labeling with
boundary state $1$. By section~\ref{sec:admis} it follows that either all arcs
are labeled $1$ or $n$ is even and even length arcs are labeled $-1$.

The sign configuration falls into one of 4 categories:
\begin{enumerate}
  \item All arcs are labeled $1$.
  \item All arcs are labeled $1$ but spokes, which are labeled $-1$.
  \item $n$ is even and all odd length arcs are labeled $1$ and even length arcs
  are labeled $-1$. Counting from the spoke which was cut, spoke $0$ is positive
  and the sign of spoke $i$ is given by parity of the distance along the
  boundary from spoke $0$ with odd corresponding to $-1$ and even to $1$.
  \item The same as above, except the spokes have opposite labels.
\end{enumerate}

But now we see that if we apply a combination of $\sigma_1$ and $\sigma_2$, we
can turn a non-zero integral $D_n$ frieze a positive $D_n$ frieze. That is
$\sigma_2$ moves the sign from type $3$ and $4$ to $1$ and $2$ and $\sigma_1$
from $2$ to $1$. Once we have a positive sign configuration, the entire frieze
is positive since a single cluster is.

We can now prove theorem~\ref{thm:dnmain}:

\begin{proof}
	In the case that $n$ is odd, applying $\sigma_1$ to the set of positive $D_n$
	friezes shows that we have at least twice as many non-zero integral friezes of
	type $D_n$ as positive ones. On the other hand the above reasoning shows that
	any non-zero integral $D_n$ frieze can be brought to a positive one by
	applying $\sigma_1$. So there are exactly 
	$$2\sum_{m=1}^n d(m)\binomial{2n-m-1}{n-m}$$ non-zero integral $D_n$ friezes
	when $n$ is odd.
	
	Similarly when $n$ is even, since we have $\sigma_1$ and $\sigma_2$, we have
	$$4\sum_{m=1}^n d(m)\binomial{2n-m-1}{n-m}$$ non-zero integral $D_n$ friezes
	when $n$ is even.
\end{proof}

\section{$B_n$, $C_n$ and $G_2$}

The results for $B_n$, $C_n$ and $G_2$ can be derived from the results
$D_{n+1}$, $A_{2n-1}$ and $D_4$ respectively by using the folding methods of
\cite{DUP}. To summarize, if $\Delta$ is a Dynkin quiver and $G$ a group of
automorphisms of $\Delta$, then $\Delta/G$ is a valued quiver and the action of
$G$ lifts to the cluster algebra $A(\Delta)$. In the case of $B_n$, $G$ is
generated by the automorphism of $D_{n+1}$ which swaps the short arms. For
$C_n$, $G$ is generated by the automorphism of $A_{2n-1}$ which reflects the
diagram through the centre node. For $G_2$, $G$ is generated by the order $3$
automorphism of $D_4$ which rotates the diagram about its central node.

In this case \cite[Corollary 5.16]{DUP} shows that $A(\Delta/G)$ can be
identified with a subalgebra of $A(\Delta)/G$. More over
\cite[Corollary 7.3]{DUP} gives equality since $\Delta$ is Dynkin.
The projection $\pi:A(\Delta)\to A(\Delta)/G$ can then be thought of as a
surjective ring homomorphism from $A(\Delta)$ to $A(\Delta/G)$, which sends the
cluster variables of $A(\Delta)$ to the cluster variables of $A(\Delta/G)$ via a
quotient by $G$.

\begin{lemma}
  Let $\Delta$ by a Dynkin quiver and $G$ a group of automorphisms, then each
  non-zero integral $\Delta/G$ frieze gives rise to a $G$ invariant non-zero
  integral $\Delta$ frieze. More over, each non-zero integral $\Delta$ frieze
  that is $G$ invariant descends to a non-zero integral $\Delta/G$ frieze.
\end{lemma}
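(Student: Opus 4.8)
The plan is to prove both directions using the surjective ring homomorphism $\pi\colon A(\Delta)\to A(\Delta/G)$ supplied by \cite{DUP}, treating friezes as ring homomorphisms to $\mathbb{Z}$ that send every cluster variable to a non-zero integer, as in the introduction. The two facts about $\pi$ that I would extract from the cited results are: (i) $\pi$ is $G$-invariant, i.e. $\pi\circ g=\pi$ for every $g\in G$; and (ii) $\pi$ restricts to a surjection from the cluster variables of $A(\Delta)$ onto those of $A(\Delta/G)$ whose fibres are exactly the $G$-orbits. Property (ii) is precisely the phrase ``via a quotient by $G$'' in the paragraph preceding the lemma.

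For the first assertion I would start with a non-zero integral $\Delta/G$ frieze, regarded as a ring homomorphism $\phi\colon A(\Delta/G)\to\mathbb{Z}$ with $\phi(\bar x)\neq 0$ for every cluster variable $\bar x$, and set $F:=\phi\circ\pi\colon A(\Delta)\to\mathbb{Z}$. This is a composite of ring homomorphisms, hence a ring homomorphism, so it automatically satisfies every exchange relation of $A(\Delta)$. For a cluster variable $x$ of $A(\Delta)$, property (ii) gives that $\pi(x)$ is a cluster variable of $A(\Delta/G)$, so $F(x)=\phi(\pi(x))$ is a non-zero integer; thus $F$ is a non-zero integral $\Delta$ frieze. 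Finally $F\circ g=\phi\circ(\pi\circ g)=\phi\circ\pi=F$ by (i), so $F$ is $G$-invariant. This direction is essentially automatic once $\pi$ is available.

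For the converse I would take a $G$-invariant non-zero integral $\Delta$ frieze $\psi\colon A(\Delta)\to\mathbb{Z}$. Because the cluster variables generate $A(\Delta)$ as a ring and each $g$ permutes them, $G$-invariance on cluster variables upgrades to $\psi\circ g=\psi$ on all of $A(\Delta)$. By the identification $A(\Delta/G)=A(\Delta)/G$ from \cite{DUP}, the map $\pi$ enjoys the universal property of the quotient by the $G$-action: every $G$-invariant ring homomorphism out of $A(\Delta)$ factors uniquely through $\pi$. Hence there is a unique ring homomorphism $\phi\colon A(\Delta/G)\to\mathbb{Z}$ with $\phi\circ\pi=\psi$. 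To see that $\phi$ is a frieze I would use (ii): any cluster variable $\bar x$ of $A(\Delta/G)$ equals $\pi(x)$ for some cluster variable $x$ of $A(\Delta)$, whence $\phi(\bar x)=\psi(x)$ is a non-zero integer. Concretely $\phi$ simply assigns to the orbit $\pi(x)$ the common value $\psi(x)$, which is well defined by $G$-invariance together with the fact that the fibre of $\pi$ over $\bar x$ is a single $G$-orbit. The two constructions are visibly mutually inverse, giving the bijection that the counting in the $B_n$, $C_n$ and $G_2$ theorems requires.

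The step I expect to be the main obstacle is the converse, and specifically the justification that $\psi$ descends through $\pi$. The ring-homomorphism viewpoint reduces this to the inclusion $\ker\pi\subseteq\ker\psi$; one always has $g\cdot a-a\in\ker\pi$ and $\psi(g\cdot a-a)=0$, so the ideal generated by the elements $g\cdot a-a$ lies in $\ker\psi$, and the content of \cite[Cor.~5.16 and 7.3]{DUP} is exactly that this ideal is all of $\ker\pi$ (equivalently, that $A(\Delta/G)$ is the genuine $G$-coinvariant quotient rather than merely a subalgebra of it). If one preferred not to quote the universal property wholesale, the alternative would be to verify directly that the orbit-value assignment $\phi$ satisfies each exchange relation of the valued quiver $\Delta/G$; this is more laborious because it requires matching each folded exchange relation with the corresponding product of exchange relations in $A(\Delta)$, and I would fall back on it only if the identification in \cite{DUP} were not strong enough to yield (i) and (ii) directly.
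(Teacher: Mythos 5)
Your proposal follows essentially the same route as the paper's own proof: one direction by composing a frieze $\phi\colon A(\Delta/G)\to\mathbb{Z}$ with the $G$-invariant surjection $\pi$, the other by descending a $G$-invariant homomorphism $\psi\colon A(\Delta)\to\mathbb{Z}$ through the identification $A(\Delta/G)=A(\Delta)/G$ from \cite{DUP}. The paper states this in two terse paragraphs; your version is correct and merely makes explicit the details the paper leaves implicit (cluster variables mapping onto cluster variables by orbits, and $\ker\pi\subseteq\ker\psi$ justifying the descent).
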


\begin{proof}
  We consider a non-zero integral $\Delta$ frieze to be a ring homomorphism from
  the cluster algebra $A(\Delta)$ to $\mathbb{Z}$ sending each cluster variable
  to a non-zero integer. Then a non-zero integral $\Delta/G$ frieze gives a
  $\Delta$ frieze by composition with $\pi$. Since $\pi$ is $G$ invariant so is
  the composition.

  Given a non-zero integral $\Delta$ frieze, if it is $G$ invariant, it descends
  to a ring homomorphism from $A(\Delta)/G$ to $\mathbb{Z}$, but this gives a
  non-zero integral $\Delta/G$ frieze.
\end{proof}

\begin{theorem}
  There are $2\binomial{2n}{n}$ non-zero integral $C_n$ friezes.
\end{theorem}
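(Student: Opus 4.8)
The plan is to pass through the folding correspondence of the preceding lemma and thereby reduce the count to one about $G$-invariant non-zero integral $A_{2n-1}$ friezes. Here $G$ is the order-two group generated by the central symmetry (rotation by $\pi$) of the $(2n+2)$-gon that models the type $A_{2n-1}$ cluster algebra; this is the geometric incarnation of the reflection of the $A_{2n-1}$ diagram through its centre node, and its fixed triangulations are exactly the centrally symmetric ones. By the lemma the non-zero integral $C_n$ friezes are in bijection with the $G$-invariant non-zero integral $A_{2n-1}$ friezes, so it suffices to count the latter.

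First I would recall the structure of non-zero integral $A_{2n-1}$ friezes obtained in Section~\ref{sec:admis} and the following one. Since the governing polygon has $2n+2$ vertices, which is even, the involution $\sigma$ of Lemma~\ref{lem:neg} (negate every arc of even length) is available, and the count there shows there are exactly $2C(2n)$ such friezes, of which $C(2n)$ are positive. As $\sigma$ is a bijection carrying each positive frieze to a non-positive one (some arc of even length exists and changes sign) and there are equally many, $C(2n)$, of each kind, it restricts to a bijection between positive and non-positive friezes. Thus every non-zero integral $A_{2n-1}$ frieze is either positive or of the form $\sigma(F)$ for a unique positive $F$.

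The key observation is that $\sigma$ commutes with the $G$-action: $G$ acts as a symmetry of the polygon and hence preserves the length of every arc, while $\sigma$ depends only on arc length. Consequently $G$ preserves positivity, and for a positive frieze $F$ the frieze $\sigma(F)$ is $G$-invariant if and only if $F$ is. It follows that the $G$-invariant non-zero integral $A_{2n-1}$ friezes split into the $G$-invariant positive friezes together with their $\sigma$-images, two disjoint sets of equal size, so their total number is twice the number of $G$-invariant positive friezes.

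Finally I would identify the $G$-invariant positive friezes. A positive $A$ frieze corresponds by Coxeter--Conway to the triangulation formed by its arcs labeled $1$, and this correspondence is $G$-equivariant, so a positive frieze is $G$-invariant exactly when that triangulation is centrally symmetric. The number of centrally symmetric triangulations of the $(2n+2)$-gon, equivalently the number of positive $C_n$ friezes of \cite{FP}, is $\binomial{2n}{n}$, and combining the steps yields $2\binomial{2n}{n}$ non-zero integral $C_n$ friezes. The main obstacle is the bookkeeping of the middle step: one must be certain that $G$ really acts through a length-preserving polygon symmetry, so that it commutes with $\sigma$, and that $\sigma$ genuinely doubles the $G$-invariant positive friezes without coincidences. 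The geometric identification of the folding automorphism with the central symmetry, together with the resulting count $\binomial{2n}{n}$ of centrally symmetric triangulations, is the only input that is not purely formal.
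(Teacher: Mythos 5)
Your proposal is correct and takes essentially the same route as the paper: fold to $G$-invariant non-zero integral $A_{2n-1}$ friezes, observe that the half-turn of the $(2n+2)$-gon preserves arc lengths so that $\sigma$ commutes with $G$ and exchanges $G$-invariant positive friezes with $G$-invariant non-positive ones, and invoke the count $\binomial{2n}{n}$ of positive $G$-invariant friezes from \cite{FP}. Your explicit partition of all non-zero friezes into positives and their $\sigma$-images is just a more detailed packaging of the paper's ``apply $\sigma$ if not already positive'' step, so the substance is identical.
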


\begin{proof}
  Given a non-zero integral $C_n$ frieze, lift it to a non-zero integral
  $A_{2n-1}$ frieze. Since $2n-1$ is odd, if the frieze is not already positive
  applying $\sigma$ will make it so. Since $\sigma$ does not change the
  magnitude of the labels and makes all signs positive, the result will be $G$
  invariant. Since there are $\binomial{2n}{n}$ positive $G$ invariant frieze
  (\cite{FP}), it follows that the number of non-zero integral $C_n$ friezes is
  at most $2\binomial{2n}{n}$.

  On the other hand, the action of $G$ on $A_{2n-1}$ identifies exactly those
  arcs that are diametrically opposed. But these arcs have the same length so
  given a $G$ invariant $A_{2n-1}$ frieze, applying $\sigma$ to it will leave it
  $G$ invariant.

  Thus there are twice as many non-zero integral $C_n$ friezes as there are
  positive $C_n$ friezes.
\end{proof}

\begin{theorem}
  There are $2\sum_{m\leq\sqrt{n+1}}\binomial{2n-m^2+1}{n}$ frieze of type
  $B_n.$
\end{theorem}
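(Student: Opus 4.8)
The plan is to mirror the $C_n$ proof, replacing the folding $A_{2n-1}\to C_n$ by the folding $D_{n+1}\to B_n$. Here $G$ is generated by the automorphism of $D_{n+1}$ that swaps the two short arms, which in the punctured $(n+1)$-gon model of \cite{FST} is exactly the operation interchanging the tagged and untagged version of every spoke while fixing every arc between boundary vertices. By the folding lemma above, the non-zero integral $B_n$ friezes are in bijection with the $G$-invariant non-zero integral $D_{n+1}$ friezes, so it suffices to count the latter and match against the positive $G$-invariant count coming from \cite{FP}.

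First I would record how the two $D_{n+1}$ involutions interact with $G$. Since $\sigma_1$ of lemma~\ref{lem:dninv:1} negates every spoke, tagged and untagged alike, it commutes with the tag-swap $G$, and hence carries $G$-invariant friezes to $G$-invariant friezes. The involution $\sigma_2$ of lemma~\ref{lem:dninv:2}, by contrast, negates the untagged spokes in one parity class and the tagged spokes in the opposite class (together with the even-length boundary arcs, which $G$ fixes). A direct check of this description yields the key conjugation identity $G\sigma_2 G^{-1}=\sigma_1\sigma_2$, so that $\sigma_2$, and likewise $\sigma_1\sigma_2$, does \emph{not} preserve $G$-invariance.

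Next I would use this to pin down which sign configurations a $G$-invariant frieze can carry. By lemma~\ref{lem:dntriang} and the admissibility analysis of the $D_n$ section, every non-zero integral $D_{n+1}$ frieze lies in the $\langle\sigma_1,\sigma_2\rangle$-orbit of a unique positive frieze $F_+$, its sign configuration being one of the four listed types according to which of $1,\sigma_1,\sigma_2,\sigma_1\sigma_2$ was applied. Writing $F=gF_+$ and assuming $F$ is $G$-invariant, I would apply $G$ and use $G\sigma_1G^{-1}=\sigma_1$ together with $G\sigma_2G^{-1}=\sigma_1\sigma_2$: when $g\in\{\sigma_2,\sigma_1\sigma_2\}$ this forces $GF_+=\sigma_1F_+$, which is impossible since $GF_+$ is again positive while $\sigma_1F_+$ has every spoke negative. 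Hence $g\in\{1,\sigma_1\}$, so $F$ has a type $1$ or type $2$ sign configuration; and since in both cases $g$ commutes with $G$, the positive representative $F_+=g^{-1}F$ is itself $G$-invariant.

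Finally I would assemble the count. The previous step shows that the $G$-invariant non-zero integral $D_{n+1}$ friezes are precisely $F_+$ and $\sigma_1F_+$ as $F_+$ ranges over the positive $G$-invariant $D_{n+1}$ friezes, and that these two are always distinct because $\sigma_1F_+$ has negative spokes. Since the number of positive $G$-invariant $D_{n+1}$ friezes is $\sum_{m\leq\sqrt{n+1}}\binomial{2n-m^2+1}{n}$ by \cite{FP}, this gives the stated total. When $n$ is even the argument degenerates pleasantly: $n+1$ is odd, so $\sigma_2$ does not exist and only the $\sigma_1$-pairing remains. The main obstacle is the third step, namely checking cleanly that no $G$-invariant frieze can sit over $F_+$ through $\sigma_2$ or $\sigma_1\sigma_2$; this rests entirely on the identity $G\sigma_2G^{-1}=\sigma_1\sigma_2$ and on the fact that $\sigma_1$ never fixes a positive frieze.
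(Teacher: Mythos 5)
Your proposal is correct and follows essentially the same route as the paper: lift a non-zero $B_n$ frieze through the folding to a $G$-invariant $D_{n+1}$ frieze, observe that $\sigma_1$ preserves $G$-invariance while $\sigma_2$ and $\sigma_1\circ\sigma_2$ destroy it (your identity $G\sigma_2G^{-1}=\sigma_1\sigma_2$ is just a sharper formulation of the paper's remark that a tagged/untagged pair acquires opposite signs), and conclude the count is twice the positive $G$-invariant total from \cite{FP}. Your explicit orbit argument with the Klein four-group $\{1,\sigma_1,\sigma_2,\sigma_1\sigma_2\}$ makes the uniqueness of the positive representative cleaner than the paper's verbal treatment, but it is the same proof in substance.
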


\begin{proof}
  Given a non-zero integral $B_n$ frieze, lift it to a $G$ invariant $D_{n+1}$
  frieze. Applying some combination of $\sigma_1$ and $\sigma_2$ will give a
  positive frieze which will still be $G$ invariant (see above proof).

  The action of $G$ on $D_{n+1}$ identifies the tagged arc with its
  corresponding untagged arc. The result of applying $\sigma_2$ or
  $\sigma_1\circ\sigma_2$ to a $G$ invariant frieze will no longer be $G$
  invariant, since the labels of a tagged/untagged pair will differ by sign.
  On the other hand applying $\sigma_1$ leave the frieze $G$ invariant, since it
  negates all spokes arcs.

  Thus there are twice as many non-zero integral $B_n$ friezes as there are
  positive ones.
\end{proof}

\begin{theorem}
  Finally for the $G_2$ case, there are just $9$ non-zero integral friezes and
  they are all positive friezes.
\end{theorem}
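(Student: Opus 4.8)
The plan is to mirror the $B_n$ and $C_n$ arguments and reduce everything to the already-understood $D_4$ friezes. Letting $G$ be the order-$3$ automorphism of $D_4$ that rotates the three outer nodes, the folding lemma puts the non-zero integral $G_2$ friezes in bijection with the $G$-invariant non-zero integral $D_4$ friezes, where $G$-invariant means the underlying ring homomorphism to $\mathbb{Z}$ is constant on every $G$-orbit of cluster variables. So it suffices to show that every $G$-invariant non-zero integral $D_4$ frieze is positive, and that there are exactly $9$ of them.

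First I would invoke the $D_4$ analysis of the previous section (here $n=4$ is even). Every non-zero integral $D_4$ frieze is obtained from a positive one by a combination of $\sigma_1$ and $\sigma_2$, and its sign configuration is one of the four listed types or their $\sigma_1$-images. Since $F$ is $G$-invariant, its sign configuration is $G$-invariant as well, so the task becomes determining which of these configurations is fixed by the triality $G$.

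The crux, and the place where $G_2$ differs qualitatively from $B_n$ and $C_n$, is the action of triality. Realizing $D_4$ as the once-punctured square triangulated by all arcs from a fixed vertex $v_0$ together with the tagged and untagged radii at $v_0$, the three leaves of the $D_4$ quiver are the untagged spoke, the tagged spoke, and the length-$2$ boundary diagonal $v_0 v_2$, and $G$ cyclically permutes this triple. For the $B_n$ folding the order-$2$ symmetry merely swapped the two radii and kept spokes among spokes, which is exactly why $\sigma_1$ survived and produced a factor of $2$; here triality instead sends a spoke to the diagonal $v_0 v_2$. Consequently a $G$-invariant frieze must assign the same value, hence the same sign, to the two radii at $v_0$ and to the diagonal. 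Running through the four sign types and their $\sigma_1$-images, the diagonal is a boundary arc whose sign is positive in the spoke-negated type and negative in the $\sigma_2$-twisted types, while the two radii carry a common sign opposite to the diagonal in the spoke-negated type and opposite signs to each other in the $\sigma_2$-twisted types; the three therefore agree only in the all-positive configuration. Hence the only $G$-invariant sign configuration is the trivial one, and every $G$-invariant non-zero integral $D_4$ frieze is positive.

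It then remains to count the positive $G$-invariant $D_4$ friezes, equivalently the positive $G_2$ friezes; by \cite{FP}, or by a direct enumeration of the finitely many positive rank-$2$ type $G_2$ friezes, there are $9$ of them, completing the proof. I expect the main obstacle to be pinning down the triality action precisely enough to justify the sign-orbit computation, since, unlike the involutions used for $B_n$ and $C_n$, the $D_4$ triality is exceptional and mixes spokes with ordinary diagonals; once that identification is in hand, the remaining verification is finite and routine.
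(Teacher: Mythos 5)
Your proposal is correct and follows essentially the same route as the paper: lift to a $G$-invariant non-zero $D_4$ frieze, invoke the $\sigma_1,\sigma_2$ classification of non-zero $D_4$ friezes, and rule out every non-positive sign configuration because triality places the tagged/untagged spoke pair in a single orbit with an even-length non-spoke arc (so $\sigma_2$ and $\sigma_1\circ\sigma_2$ fail by giving the two spokes opposite signs, and $\sigma_1$ fails by negating the spokes but not the other arc). One small correction: in the paper's diagram the third arc in that orbit is the diagonal joining the two \emph{neighbors} of the spoke vertex (the arc $v_1v_3$ passing on the far side of the puncture), not the arc $v_0v_2$ through the spoke vertex as you state, and it is an interior arc rather than a boundary arc --- but since your sign computation uses only that this orbit member is an even-length non-spoke arc, the misidentification does not affect the argument.
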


\begin{proof}
  Suppose we have a non-zero integral $G_2$ frieze, we can lift it to a $G$
  invariant non-zero $D_4$ frieze. More over, by applying a combination of
  $\sigma_1$ and $\sigma_2$ we will obtain a positive $D_4$ frieze which must
  still be $G$ invariant.

	The $G$ action on $D_4$ leading to $G_2$ identifies the following arcs in a
	punctured $4$-gon:

$$\begin{tikzpicture}
\draw (0,0) -- (0,2) -- (2,2) -- (2,0) -- cycle;
\draw[tage] (0,0) .. controls (0.25,0.75) .. (1,1);
\draw (0,0) .. controls (0.75,0.25) .. (1,1);
\draw (2,0) .. controls (1.25,1.25) .. (0,2);
\fill (1,1) circle(.035);
\end{tikzpicture}$$

  Applying $\sigma_2$ or $\sigma_1\circ\sigma_2$ to a positive $G$ invariant
  $D_4$ frieze gives a non-zero frieze which is not $G$ invariant since the
  labels of each pair of spokes have different signs. Applying $\sigma_1$
	negates all spoke labels but does not change the label of the diagonal arc in
	the above diagram, thus it is also not $G$ invariant.
\end{proof}

\section{Future directions}

By replacing positive by non-zero, we open the doors to consider the structure
of friezes over rings of integers. Interesting choices for such rings might
include $\mathbb{Z}[\zeta]$ for $\zeta$ a root of unity.

The results in this direction would look slightly different than those above.
For instance iterating lemma~\ref{lem:anarc} shows that starting with a boundary
state with bounded norm, one can obtain a triangulation of the $n$-gon where
arcs of length $i$ have labels bounded by the sum the norms of the labels on
one side of the initial polygon.

Indeed if we consider the case of $A_2$ we have the following:

\begin{theorem}
  There are $12$ nonzero $R=\mathbb{Z}[i]$ friezes of type $A_1$.
\end{theorem}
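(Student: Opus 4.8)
The plan is to reduce the count to the purely arithmetic problem of factoring $2$ in $R=\mathbb{Z}[i]$. First I would recall that type $A_1$ is the case $n=4$ of the triangulation model of this section: a square has four boundary arcs, all labeled $1$, and exactly two cluster variables, namely the two diagonals $e$ and $f$. The sole exchange (Ptolemy) relation is $ef=1\cdot 1+1\cdot 1=2$. Hence a nonzero $R$-frieze of type $A_1$ is exactly a pair $(e,f)$ of nonzero Gaussian integers with $ef=2$, and the number of friezes equals the number of ordered factorizations $2=ef$ in $\mathbb{Z}[i]$.

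Next I would observe that an ordered factorization $2=ef$ is determined by its first factor $e$, which ranges over the divisors of $2$ (with $f=2/e$ then forced and automatically nonzero). So the count equals the number of Gaussian-integer divisors of $2$. Here the key arithmetic fact is that $2$ ramifies: from $(1+i)^2=2i$ one gets $2=-i(1+i)^2$, where $1+i$ is a Gaussian prime and $-i$ is a unit. Recall also that the unit group $\mathbb{Z}[i]^\times=\{1,-1,i,-i\}$ has order $4$.

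Finally, since $\mathbb{Z}[i]$ is a unique factorization domain, every divisor of $2$ has the form $u(1+i)^j$ with $u$ a unit and $j\in\{0,1,2\}$; distinct pairs $(u,j)$ give distinct elements because the norm $N(u(1+i)^j)=2^j$ separates the three powers while the four associates within a single power are visibly different. This yields $4\cdot 3=12$ divisors, hence $12$ ordered factorizations and $12$ nonzero $\mathbb{Z}[i]$ friezes. The one step requiring care---and the one I would double-check---is exactly this unit-and-ramification bookkeeping: the answer is $4\cdot 3$ rather than $4\cdot 2$ (as it would be for an inert prime such as $3$) or $4\cdot 4$ (as for a split prime power such as $5$) precisely because $2$ is a unit times the square of the single ramified prime $1+i$.
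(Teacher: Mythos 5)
Your proposal is correct and follows essentially the same route as the paper: both reduce the count to enumerating the divisors of $2$ in $\mathbb{Z}[i]$, since the two cluster variables are $x_1$ and $2/x_1$ (equivalently, your diagonals $e$ and $f$ with $ef=2$). The only difference is that the paper simply lists the twelve divisors $\pm 1$, $\pm 2$, $\pm i$, $\pm 2i$, $\pm 1\pm i$, whereas you justify completeness of the list via the ramified factorization $2=-i(1+i)^2$ and the four units---a detail the paper leaves implicit.
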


\begin{proof}
  Since the two cluster variables are $x_1$ and $\frac{2}{x_1}$, it suffices to
  find all elements $x_1\in R$ such that $2/x_1\in R$. The original non-zero
  integral friezes $\pm 1$ and $\pm 2$ work. We also have $\pm 1\pm i$, $\pm i$
  and $\pm 2i$.
\end{proof}

In each case it is possible to pick a triangulation such that the norm of the
label of the internal arcs is bounded (strictly) by $2$.

Irrespective of this problem, we should expect the following to be true:

\begin{conjecture}
  There are finitely many non-zero $\mathbb{Z}[\zeta]$ friezes of Dynkin type,
  for any $\zeta$ a root of unity.
\end{conjecture}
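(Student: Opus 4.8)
The plan is to reduce, for a fixed Dynkin type $\Delta$ and a fixed root of unity $\zeta$, the finiteness of non-zero $\mathbb{Z}[\zeta]$ friezes to the finiteness of ``small'' lattice points in the ring of integers $\mathcal{O} = \mathbb{Z}[\zeta]$ of $K = \mathbb{Q}(\zeta)$. Since $\Delta$ is of finite type there are only finitely many cluster variables and finitely many clusters, and a frieze is completely determined by its values on any single cluster, all other values being obtained by iterated mutation. Hence it suffices to produce, for every frieze, one cluster on which all labels are bounded, together with a finiteness statement for bounded labels. First I would fix the combinatorial model of each type (triangulations of an $n$-gon for $A$, of a punctured $n$-gon for $D$, and folding for $B$, $C$, $G_2$, exactly as in the earlier sections) so that the exchange relations become the Ptolemy-type relations already recorded here.

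The heart of the argument is a size-reduction estimate generalizing Lemma~\ref{lem:anarc} and Lemma~\ref{lem:dntriang}. For each archimedean absolute value $|\cdot|_v$ of $K$ the triangle inequality holds, so an exchange relation $a_i f_i = f_{i-1} + f_{i+1}$ (the boundary being labeled $1$, so $|b_i|_v = 1$) gives $|a_i|_v\,|f_i|_v \le |f_{i-1}|_v + |f_{i+1}|_v$. Iterating the reduction of Lemma~\ref{lem:anarc}, one expects to reach a cluster whose labels have $v$-size bounded in terms of the boundary data, as sketched in the comment preceding this conjecture. Over a discrete ring this finishes the proof: if $\mathcal{O}$ is discrete in a single complex place, that is if $K$ is $\mathbb{Q}$ or imaginary quadratic, equivalently $\zeta$ has order in $\{1,2,3,4,6\}$, then only finitely many elements have bounded size, so there are finitely many bounded labelings of finitely many clusters, and the computation $x_1 \mid 2$ for type $A_1$ is the prototype.

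I expect two obstacles, and they are where the real difficulty lies. The first is that the monotonicity steps in Lemma~\ref{lem:anarc} and Lemma~\ref{lem:dntriang}, namely ``an increasing sequence cannot run from $1$ to $-1$'' and ``a cyclic monotone sequence is constant,'' use the total order of $\mathbb{R}$, which has no analogue at a complex place; these must be replaced by an intrinsically complex argument, most plausibly a descent in which one selects among all clusters of a frieze one minimizing $\sum_x |x|_v^2$ and shows that mutating an oversized label strictly decreases this potential. The second, and more serious, obstacle is that controlling a single place is insufficient once $\mathcal{O}$ fails to be discrete in one copy of $\mathbb{C}$: when $\zeta$ has order $\ge 5$ (other than $6$), $\mathbb{Z}[\zeta]$ has infinitely many units, and then already the type $A_1$ divisibility $x_1 \mid 2$ has infinitely many solutions $x_1 = u$ with $u$ a unit, so the conjecture as literally stated must be restricted to the finite-unit cases above, or else proved via a bound at all $\phi(m)/2$ archimedean places simultaneously, after which the standard finiteness of algebraic integers with all conjugates bounded applies. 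Establishing such a simultaneous bound is the main obstacle, since a descent lowers the potential at one place but may raise it at another, and it is unclear that one cluster can be made small at every embedding at once.
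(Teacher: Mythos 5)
This statement is a conjecture appearing in the paper's ``Future directions'' section; the paper offers no proof of it, so there is nothing to compare your attempt against, and your text is rightly a program rather than a proof. The most valuable thing in it is buried inside your ``second obstacle,'' and you should have promoted it from caveat to headline: it is not an obstacle but a refutation of the literal statement. By the paper's own description of type $A_1$, a non-zero $R$-frieze is exactly an element $x_1\in R$ with $x_1\neq 0$ and $2/x_1\in R\setminus\{0\}$, i.e.\ a divisor of $2$ in $R$; every unit qualifies, and $\mathbb{Z}[\zeta]^\times$ is infinite whenever the order $m$ of $\zeta$ lies outside $\{1,2,3,4,6\}$ (the unit rank is $\varphi(m)/2-1>0$). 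So already for $\zeta=\zeta_5$ there are infinitely many non-zero $\mathbb{Z}[\zeta]$ friezes of type $A_1$, and the conjecture as stated is false. Moreover, the unit orbit also shows that the ``simultaneous bound at all archimedean places'' you identify as the main difficulty is not merely hard but unattainable in general: by Kronecker's theorem all but finitely many divisors of $2$ have some conjugate of large absolute value, so no cluster of such a frieze can be made small at every embedding at once. Any correct version of the conjecture must restrict to $m\in\{1,2,3,4,6\}$ or count friezes up to the unit action.

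Your ``first obstacle,'' by contrast, is illusory. The monotonicity arguments in Lemma~\ref{lem:anarc} and Lemma~\ref{lem:dntriang} are never carried out in the ordered ring of labels: after taking absolute values, the statements ``an increasing sequence cannot run from $1$ to $-1$'' and ``a cyclic monotone sequence is constant'' concern the real sequences $|f_i|$ and go through verbatim for any archimedean absolute value on $\mathbb{Q}(\zeta)$. (The paper's remark preceding the conjecture, that iterating Lemma~\ref{lem:anarc} bounds the labels of a full triangulation by sums of boundary norms, and its observation that over $\mathbb{Z}[i]$ one reaches a triangulation with internal labels of norm strictly below $2$, presuppose exactly this.) Hence no potential-function descent needs to be invented: in the finite-unit cases, where $\mathcal{O}$ is discrete in $\mathbb{C}$, the unmodified lemma plus discreteness already yields finiteness in type $A$, and what genuinely remains is the $D_n$ bookkeeping (the relations of figure~\ref{fig:dnrel}, e.g.\ the divisibility forced by the self-folded configuration, over rings where ``$e$ divides $2$'' has more solutions than over $\mathbb{Z}$) and the folding step for $B$, $C$, $G_2$. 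One small further point: your displayed relation $a_if_i=f_{i-1}+f_{i+1}$ silently sets the boundary to $1$; if you intend to iterate the reduction through the enlarged algebra $FA_{n-3}$, the cut arcs become boundary arcs with labels other than $1$, so you must retain the coefficients $b_i$ as the paper's statement of Lemma~\ref{lem:anarc} does.
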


\end{document}